\definecolor{Blue}{rgb}{0.3,0.3,0.9}
\DeclareSymbolFont{cyrillic}{T2A}{cmr}{m}{n}
\DeclareMathSymbol{\sha}{\mathalpha}{cyrillic}{216}
\newcommand{\sk}{\vspace{0.1in}}
\newtheorem{thm}{Theorem}[section]
\newtheorem{def-thm}[thm]{Definition-Theorem}
\newtheorem{cor}[thm]{Corollary}
\newtheorem{lem}[thm]{Lemma}
\newtheorem{def-lem}[thm]{Definition-Lemma}
\newtheorem{prop}[thm]{Proposition}
\newtheorem*{mainthm1}{Theorem A}
\newtheorem*{mainthm2}{Theorem B}
\newtheorem*{mainthm3}{Theorem C}
\theoremstyle{definition}
\newtheorem{defn}[thm]{Definition}
\theoremstyle{remark}
\newtheorem{rem}[thm]{Remark}
\numberwithin{equation}{section}
\newcommand{\cD}{\mathcal{D}}
\newcommand{\cT}{\mathcal{T}}
\newcommand{\cO}{\mathcal{O}}
\newcommand{\cS}{\mathcal{S}}
\newcommand{\cJ}{\mathcal{J}}
\newcommand{\cR}{\mathbb{I}}
\newcommand{\nn}{\mathfrak{n}}
\newcommand{\pp}{\mathfrak{p}}
\newcommand{\qq}{\mathfrak{q}}
\newcommand{\fa}{\mathfrak{a}}
\newcommand{\cI}{\mathcal{I}}
\newcommand{\bQ}{\mathbf{Q}}
\newcommand{\bZ}{\mathbf{Z}}
\newcommand{\bC}{\mathbf{C}}
\newcommand{\bT}{\mathbb{T}}
\newcommand{\bchi}{{\chi}}
\newcommand{\ac}{K}
\newcommand{\F}{{\mathbf{f}}}
\newcommand{\fil}{{\mathscr{F}}}
\def\x{\xi}
\def\bx{\boldsymbol{\xi}}
\def\bchi{\boldsymbol{\chi}}
\def\ro{\mathfrak{O}}
\def\ac{\Gamma}
\def\k{\nu}
\newcommand{\unr}{R_0}
\newcommand{\cW}{W}
\begin{document}

\title{On the $p$-adic variation of Heegner points}
\author[F. Castella]{Francesc Castella}
\address{Mathematics Department, Princeton University, Fine Hall, Princeton, NJ 08544-1000, USA}
\email{fcabello@math.princeton.edu}

\thanks{This project has received funding from the European Research Council (ERC) under the European Union's
Horizon 2020 research and innovation programme (grant agreement No. 682152).}






\begin{abstract}
In this paper, we prove an ``explicit reciprocity law''
relating Howard's system of big Heegner points 
to a two-variable $p$-adic $L$-function (constructed here)
interpolating the $p$-adic Rankin $L$-series
of Bertolini--Darmon--Prasanna 
in Hida families.
As applications, we obtain a direct relation between classical Heegner cycles and the
higher weight specializations of big Heegner points, refining
earlier work of the author,
and prove the vanishing of Selmer groups 
of CM elliptic curves twisted by $2$-dimensional Artin representations in cases predicted by the equivariant
Birch and Swinnerton-Dyer conjecture.
\end{abstract}

\subjclass[2010]
{11G05 (Primary); 11G40 (Secondary)}
\maketitle

\setcounter{tocdepth}{1}
\setcounter{section}{0}
\tableofcontents

\section{Introduction}

Let $f=\sum_{n\geqslant 1}a_n(f)q^n\in S_{2r}(\Gamma_0(N))$ be a newform of weight $2r\geqslant 2$, fix
a prime $p\nmid 6N$, and let $L$ be a finite extension of $\bQ_p$ with ring of integers $\ro$
containing the image the Fourier coefficients $a_n(f)$ under a fixed embedding
$\imath_p:\overline{\bQ}\hookrightarrow\overline{\bQ}_p$. Denote by
\[
\rho_f:G_{\bQ}:={\rm Gal}(\overline{\bQ}/\bQ)\longrightarrow{\rm Aut}_L(V_f(r))\simeq{\rm GL}_2(L)
\]
the Kummer self-dual twist of the $p$-adic Galois representation associated with $f$.
Let $K$ be an imaginary quadratic field of odd discriminant $-D_K<-3$ and ring of integers $\mathcal{O}_K$
satisfying the classical \emph{Heegner hypothesis} relative to $N$:
\begin{equation}\label{eq:heeg-hyp}
\textrm{there is an integral ideal $\mathfrak{N}$ of $K$ with $\cO_K/\mathfrak{N}\simeq\bZ/N\bZ$;}\tag{heeg}
\end{equation}
equivalently, every prime $q\mid N$ either splits or ramifies in $K$, with $q^2\nmid N$
in the latter case.
\sk

The first purpose of this paper 
is to complete earlier work of the author \cite{cas-inv} comparing two natural
constructions of a cohomology class of ``Heegner-type'' attached to the pair $(f,K)$.
For the first of these, let ${\rm Sel}(K,V_f(r))\subseteq H^1_{}(G_K,V_f(r))$
be the Bloch--Kato Selmer group for the representation $V_f(r)\vert_{{\rm Gal}(\overline{\bQ}/K)}$.
By 
\cite{nekovarCRM}, the image under the $p$-adic \'etale Abel--Jacobi map
of classical Heegner cycles \cite{nekovar302} on the $(2r-1)$-dimensional Kuga--Sato variety of level $N$
give rise to a class
\[
\Phi^{\rm{\acute{e}t}}_{f,K}(\Delta^{\rm heeg}_r)\in{\rm Sel}(K,V_f(r)).
\]

For the second class, assume that $f$ is \emph{ordinary at $\imath_p$}, i.e.:
\begin{equation}\label{eq:ord-hyp}
a_p(f)\in\ro^\times\tag{ord}.
\end{equation}
Fix a Galois-stable $\ro$-lattice $T_f\subseteq V_f$,
let $\bar{\rho}_f:G_\bQ\rightarrow{\rm GL}_2(\kappa_L)$ be the associated semisimple
residual representation, where $\kappa_L$ is the residue field of $L$, and assume that
\begin{equation}\label{eq:irr-hyp}
\textrm{$\bar\rho_f$ is irreducible}.\tag{irred}
\end{equation}
Let $D_p\subseteq G_\bQ$ be a fixed decomposition group at $p$.
By hypothesis (\ref{eq:ord-hyp}), the restriction
$\rho_f\vert_{D_p}$ can be made upper-triangular, and we shall assume in addition that
\begin{equation}\label{eq:dist-hyp}
\textrm{$\bar\rho_f$ is $D_p$-distinguished};\tag{dist}
\end{equation}
i.e., the semisimplification of $\bar{\rho}_f\vert_{D_p}$
is the direct sum of two \emph{distinct} characters.
Suppose that $r\equiv 1\pmod{p-1}$, 
and let $\mathbf{f}=\sum_{n\geqslant 1}\mathbf{a}_nq^n\in\cR[[q]]$
be the Hida family passing though $f$. Thus $\cR$ is a finite flat
extension of $\ro[[X]]$, and for every continuous $\ro$-algebra homomorphism $\k:\cR\rightarrow\overline{\bQ}_p$
satisfying $\k(1+X)=(1+p)^{k_\k-2}$ for some integer $k_\k\geqslant 2$ with $k_\k\equiv 2\pmod{p-1}$,
the $q$-series $\mathbf{f}_\k:=\sum_{n\geqslant 1}\k(\mathbf{a}_n)q^n$
is such that
\begin{align*}
\mathbf{f}_\k
=f_\k(q)-\frac{p^{k_\k-1}}{\k(\mathbf{a}_p)}f_\k(q^p),
\end{align*}
for some newform $f_\k\in S_{k_\k}(\Gamma_0(N))$,
with $f=f_{\k}$ for a unique $\k$ with $k_\k=2r$.
(More generally, we may consider the image
of $\mathbf{f}$ under maps $\k:\cR\rightarrow\overline{\bQ}_p$ with arbitrary $k_{\k}\in\bZ_{>0}$.)
Under the above hypotheses, Howard's
construction of big Heegner points \cite{howard-invmath} produces a class
\[
\mathfrak{Z}_0\in H^1(G_K,\mathbf{T}^\dagger),
\]
where $\mathbf{T}$ is a self-dual twist
of the big Galois representation $\mathbf{\rho}_{\mathbf{f}}$
associated with $\mathbf{f}$. Under some additional hypothesis on the residual representation
$\bar{\rho}_{\mathbf{f}}\simeq\bar{\rho}_f$ when $(D_K,N)>1$, one can show that
$\mathfrak{Z}_0$ lies in the so-called \emph{strict Greenberg Selmer group}
${\rm Sel}_{\rm Gr}(K,\mathbf{T}^\dagger)\subseteq H^1(G_K,\mathbf{T}^\dagger)$,
and so its image under the specialization map $\k_f$ produces a second class
$\nu_f(\mathfrak{Z}_0)\in{\rm Sel}(K,V_f(r))$.

\begin{mainthm1}[Theorem~\ref{thm:higher-wt-sp}]
Assume in addition that $p=\pp\overline{\pp}$ splits in $K$, $\bar{\rho}_f\vert_{G_K}$ is irreducible,
and $\bar{\rho}_f$ is ramified at every prime $q\mid N$ nonsplit in $K$. Then
\[
\nu_f(\mathfrak{Z}_0)=\biggl(1-\frac{p^{r-1}}{\nu_f(\mathbf{a}_p)}\biggr)^2
\frac{\Phi^{\rm{\acute{e}t}}_{f,K}(\Delta^{\rm heeg}_r)}{u_K(2\sqrt{-D_K})^{r-1}},
\]
where $u_K=\vert\cO_K^\times\vert/2$.
\end{mainthm1}

This subsumes the main result of \cite{cas-inv}, which only implies the equality in Theorem~A
under the assumption of Howard's ``horizontal nonvanishing conjecture'' \cite[Conj.~2.2.2]{howard-invmath}
and the nondegeneracy of the cyclotomic $p$-adic height pairing. The class $\mathfrak{Z}_0$
is obtained from Howard's big Heegner point $\mathfrak{X}_1$ of conductor $1$, and more generally
Theorem~\ref{thm:higher-wt-sp} establishes a comparison between the Selmer classes constructed from
Heegner cycles of conductor $c\geqslant 1$ prime to $Np$ and the corresponding higher weight specializations of the
big Heegner point $\mathfrak{X}_c$. In particular, Theorem~\ref{thm:higher-wt-sp} answers
a question raised by Howard (see \cite[p.~93]{howard-invmath}).
\sk

Similarly as in \cite{cas-inv}, the proof of Theorem~A follows from relating the cohomology classes involved to 
special values of $L$-functions. More precisely, extending work of
Bertolini--Darmon--Prasanna \cite{bdp1} and Brako{\v{c}}evi{\'c} \cite{braIMRN},
in \cite{cas-hsieh1} we constructed an anticyclotomic $p$-adic $L$-function
$\mathscr{L}_{\pp,\psi}(f)$ interpolation central critical values for the Rankin--Selberg
convolution of $f$ with certain theta series. Moreover, also in \cite{cas-hsieh1} we constructed a compatible
system of cohomology classes $\mathbf{z}_f$ interpolating the $p$-adic \'etale Abel--Jacobi
images of (generalized) Heegner cycles of $p$-power conductor, and
extending the $p$-adic Gross--Zagier formula
of \cite{bdp1} 
we obtained an ``explicit reciprocity law''
\begin{equation}\label{eq:ERL-CH}
\langle\mathcal{L}_{\pp,\psi}(\mathbf{z}_f),\omega_f\otimes t^{1-2r}\rangle=-\mathscr{L}_{\pp,\psi}(f)
\end{equation}
relating 
$\mathscr{L}_{\pp,\psi}(f)$ to the image of $\mathbf{z}_f$ under a Perrin-Riou ``big logarithm map''.
In Section~\ref{sec:Lp} of this paper, we construct a two-variable $p$-adic $L$-function $\mathscr{L}_{\pp,\bx}(\mathbf{f})$
interpolating the $p$-adic $L$-functions of \cite{cas-hsieh1} attached to the different
specializations $f_\nu$ of $\mathbf{f}$; in particular,
\begin{equation}\label{eq:esp-L}
\nu_f(\mathscr{L}_{\pp,\bx}(\mathbf{f}))=\mathscr{L}_{\pp,\psi}(f).
\end{equation}
The key new ingredient in our proof of Theorem~A is then the connection that we find
between $\mathscr{L}_{\pp,\bx}(\mathbf{f})$ and the system $\mathfrak{Z}_\infty$
of Howard's big Heegner points of $p$-power conductor.

\begin{mainthm2}[Theorem~\ref{thm:equality}]
There is a Perrin-Riou big logarithm map $\mathcal{L}_{\pp,\bx}$ sending Howard's system of big Heegner points $\mathfrak{Z}_\infty$
attached to $\mathbf{f}$ over the anticyclotomic $\bZ_p$-extension of $K$
to the two-variable $p$-adic $L$-function $\mathscr{L}_{\pp,\bx}(\mathbf{f})$.
\end{mainthm2}

The construction of the Perrin-Riou map $\mathcal{L}_{\pp,\bx}$ is given in Section~\ref{sec:2varL},
building upon work of Ochiai~\cite{Ochiai-Col} and Loeffler--Zerbes~\cite{LZ2},
and the proof of the ``explicit reciprocity law'' of Theorem~B is obtained in Section~\ref{sec:comparison}
after a suitable extension of the calculations in \cite{cas-inv}. With this result
at hand, the proof of Theorem~A follows easily by specializing the equality in
Theorem~\ref{thm:equality} at $\nu_f$, using $(\ref{eq:esp-L})$ and the interpolation
property of the map $\mathcal{L}_{\pp,\bx}$, and comparing it with the equality in $(\ref{eq:ERL-CH})$.
\sk


The second purpose of this paper is to exploit the $p$-adic variation of Heegner points
in Hida families to establish certain new cases of the equivariant Birch--Swinnerton-Dyer conjecture
for rational elliptic curves with complex multiplication. More precisely,
let $A/\bQ$ be an elliptic curve with CM, and let
\[
\varrho:G_\bQ\longrightarrow{\rm Aut}_E(V_\varrho)\simeq{\rm GL}_2(E)
\]
be a $2$-dimensional odd and irreducible Artin representation factoring through
a finite quotient ${\rm Gal}(F/\bQ)$ and with values in a finite extension $E\subseteq\bC$ of $\bQ$.
Let $T_p(A)$ be the $p$-adic Tate module of $A$, and set $V_p(A):=\bQ_p\otimes_{\bZ_p}T_p(A)$.
Associated to the compatible system $V_p(A)\otimes\imath_pV_\varrho$ of $p$-adic representations of $G_\bQ$
is a Artin--Hasse--Weil $L$-function $L(A/\bQ,\varrho,s)$. This is defined for ${\rm Re}(s)>3/2$
by an absolutely convergent Euler product of degree $4$, and by
\cite{hecke-CM}
and \cite{KW-serre-I} it is known to admit
an analytic continuation to the entire complex plane, with a functional equation relating
its values at $s$ and $2-s$. The equivariant Birch--Swinnerton-Dyer conjecture then predicts that
\begin{equation}\label{eq:BSD-rank}
{\rm ord}_{s=1}L(A/\bQ,\varrho,s)\overset{?}={\rm dim}_L{\rm Hom}_{G_{\bQ}}(V_\varrho,A(F)_L),
\end{equation}
and that
\begin{equation}\label{eq:BSD-sha}
{\rm Hom}_{G_{\bQ}}(V_\varrho,\sha_{p^\infty}(A/F)_L)\overset{?}=\{0\}
\end{equation}
for all primes $p$, where $\sha_{p^\infty}(A/F)$ is the $p$-primary component of the Tate--Shafarevich
group of $A/F$, and for any abelian group $M$ let set $M_L:=M\otimes_{\bZ}L$.
Let $N_A$ and $N_\varrho$ be the conductor of $A$ and $\varrho$, respectively, and denote
by ${\rm Sel}(F,V_pA)\subseteq H^1(G_F,V_p(A))$ the Bloch--Kato
Selmer group for $V_p(A)\vert_{{\rm Gal}(\overline{\bQ}/F)}$.

\begin{mainthm3}
\label{mainthm:BSD}
Let $A/\bQ$ be an elliptic curve of conductor $N_A$ and with complex multiplication by an imaginary quadratic field $K$,
let $p\nmid 6N_\varrho N_A$ be a prime, and let $\mathfrak{P}$ be a prime of $E$ above $p$.
Assume that:
\begin{itemize}
\item{} $N_\varrho$ and $N_A$ are coprime;
\item{} $p=\pp\overline\pp$ splits in $K$;
\item{} $K$ satisfies hypothesis {\rm (\ref{eq:heeg-hyp})} relative to $N_\varrho$;
\item{} $\varrho({\rm Frob}_p)$ has distinct eigenvalues modulo $\mathfrak{P}$.
\end{itemize}
If $L(A/\bQ,\varrho,1)\neq 0$, then
\[
{\rm Hom}_{G_\bQ}(V_\varrho,{\rm Sel}(F,V_p(A))_L)=\{0\}.
\]
In particular, $(\ref{eq:BSD-rank})$ and $(\ref{eq:BSD-sha})$ hold.
\end{mainthm3}

The conclusion that $(\ref{eq:BSD-rank})$ holds under the nonvanishing of $L(A/\bQ,\varrho,s)$ at $s=1$
was already contained in earlier work of Bertolini--Darmon--Rotger \cite[Thm.~A]{BDR2},
while recent work of Kings--Loeffler--Zerbes \cite[Thm.~11.7.4]{KLZ2} establishes an analog of Theorem~C
for rational elliptic curves \emph{without} complex multiplication (the CM case
is excluded in \cite{KLZ2} by the ``big image hypothesis'' of [\emph{loc.cit.}, \S{11.1}]).
Thus the new content of Theorem~C is the vanishing of the $\varrho$-isotypical
component of $\sha_{p^\infty}(A/F)_L$ for ``half'' of the primes $p$ under the nonvanishing of $L(A/\bQ,\varrho,1)$.
\sk

Let us conclude this Introduction with a few words about the proof of Theorem~C. Denote
by $L(f/K,\chi,s)$ the Rankin--Selberg $L$-function for the convolution of a cusp form
$f\in S_k(\Gamma_1(N))$ with a Hecke character $\chi$ of $K$.
From the explicit reciprocity law of Theorem~B, we deduce a proof of the implication
\[
L(f_\nu/K,\chi\mathbf{N}^{k_\nu/2},0)\neq 0\;\Longrightarrow\;\nu(\mathfrak{Z}_\infty)^{\chi^{-1}}\neq 0,
\]
for $\nu:\cR\rightarrow\overline{\bQ}_p$ of weight $k_\nu>0$
and certain anticyclotomic Hecke characters $\chi$. Since Howard's
big Heegner points satisfies the axioms of an 
Euler system,
one can deduce from Kolyvagin's methods 
(as extended in \cite[\S{7.2}]{cas-hsieh1} to the anticyclotomic setting) a proof of the implication
\[
L(f_\nu/K,\chi\mathbf{N}^{k_\nu/2},0)\neq 0\;\Longrightarrow\;{\rm Sel}(K,V_{\nu,\chi})=\{0\},
\]
where ${\rm Sel}(K,V_{\nu,\chi})$ is the Bloch--Kato Selmer group for the representation
$V_{f_\nu}\vert_{G_K}\otimes\chi$. Since by 
\cite{KW-serre-I} any Artin representation $\varrho$ as in Theorem~C is attached to some $g\in S_1(\Gamma_1(N_\varrho))$,
taking $\chi$ so that $\chi\mathbf{N}^{1/2}$ corresponds to the grossencharacter of $A$, 
$\mathbf{f}$ to be a Hida family passing through $g$, 
and specializing the resulting $\mathfrak{Z}_\infty$ to weight one, the proof of Theorem~C follows.
\sk

\noindent\emph{Some notations and definitions.}
For any place $v$ of number field $E$,
we let ${\rm rec}_v:E_v^\times\rightarrow G_{E_v}^{\rm ab}$ and
${\rm rec}_E:E^\times\backslash\mathbb{A}_E^\times\rightarrow G_E^{\rm ab}$
be the local and global reciprocity map, respectively, with geometric
normalizations. If $\phi:\bZ_p^\times\rightarrow\bC^\times$
is a continuous character of conductor $p^n$, the Gauss sum of $\phi$ is defined by
\[
\mathfrak{g}(\phi)=\sum_{u\in(\bZ/p^n\bZ)^\times}\phi(u)\boldsymbol{\rm e}(u/p^n),
\]
where $\boldsymbol{\rm e}(z)={\rm exp}(2\pi iz)$, and if $\chi:\bQ_p\rightarrow\bC^\times$
is a continuous character of conductor $p^n$, we define the $\varepsilon$-factor of $\chi$ by
$\varepsilon(\chi)=p^n\chi^{-1}(p^n)\mathfrak{g}(\chi^{-1})^{-1}$.


\section{$p$-adic Rankin $L$-series}\label{sec:Lp}

In this section, 
we give the construction of a two-variable
anticyclotomic $p$-adic $L$-function $\mathscr{L}_{\pp,\bx}(\mathbf{f})$ attached to a Hida family $\mathbf{f}$ and
an imaginary quadratic field $K$ in which $p=\pp\overline{\pp}$ splits.
Such construction 
closely parallels the one-variable construction by Brako{\v{c}}evi{\'c}~\cite{braIMRN},
and was essentially contained in \cite{bra2}.

\subsection{Geometric and $p$-adic modular forms}\label{sec:modular}

Fix a prime $p$, and let $N\geqslant 3$ be an integer prime to $p$.

\begin{defn}\label{def:GMF}
Let $k$ be an integer and let $B$ be a $\bZ_{(p)}$-algebra. A \emph{geometric modular form} $f$
of weight $k$ on $\Gamma_1(Np^\infty)$ defined over $B$ is a rule which assigns to every triple
$(A,\eta,\omega)_{/C}$, over an arbitrary $B$-algebra $C$, consisting of:
\begin{itemize}
\item{} an elliptic curve $A/C$;
\item{} a $\Gamma_1(Np^\infty)$-level structure $\eta$ on $A$, i.e., an immersion
\[
\eta=(\eta^{(p)},\eta_p):\boldsymbol{\mu}_N\oplus\boldsymbol{\mu}_{p^\infty}\hookrightarrow A[N]\oplus A[p^\infty]
\]
as group schemes over $S$;
\item{} a $C$-basis $\omega$ of $H^0(A,\Omega^1_{A/C})$,
\end{itemize}
a value $f(A,\eta,\omega)\in C$ depending only on the isomorphism class of $(A,\eta,\omega)$ over $C$
and such that:
\begin{enumerate}
\item{} For any $B$-algebra homomorphism $\varphi:C\rightarrow C'$, we have
\[
f((A,\eta,\omega)\otimes_{C}C')=\varphi(f(A,\eta,\omega));
\]
\item{} For all $\lambda\in C^\times$, we have
\[
f(A,\eta,\lambda\omega)=\lambda^{-k}f(A,\eta,\omega);
\]
\item{} Letting $({\rm Tate}(q),\eta_{\rm can},\omega_{\rm can})_{/B((q))}$ be
the Tate elliptic curve $\mathbf{G}_m/q^{\bZ}$
equipped with its canonical level structure $\eta_{\rm can}$ and differential $\omega_{\rm can}$, we have
\[
f({\rm Tate}(q),\eta_{\rm can},\omega_{\rm can})\in B[[q]].
\]
\end{enumerate}
\end{defn}

Let ${\rm Ig}(N)_{/\bZ_{(p)}}$ be the Igusa scheme
parameterizing pairs $(A,\eta)_{/S}$ of elliptic curves equipped with $\Gamma_1(Np^\infty)$-level structure over
arbitrary locally Noetherian $\bZ_{(p)}$-schemes $S$. 
The generic fiber ${\rm Ig}(N)_{/\bQ}$ is given by
\begin{equation}\label{eq:perfectoid}
{\rm Ig}(N)_{/\bQ}=\varprojlim_sY_1(Np^s)_{/\bQ},
\end{equation}
where $Y_1(Np^s)_{/\bQ}$ is the usual open modular curve of level $\Gamma_1(Np^s)$,
and a geometric modular form $f$ as in Definition~\ref{def:GMF}
can be viewed as a section of a certain sheaf on ${\rm Ig}(N)_{/\bZ_{(p)}}$.

\begin{defn}
Denote by $\Gamma^{\rm wt}$ the group $1+p\bZ_p\subseteq\bZ_p^\times$,
let $k\in\bZ_p$ and let $\varepsilon:\Gamma^{\rm wt}\rightarrow\boldsymbol{\mu}_{p^\infty}$
be a finite order character. A \emph{$p$-adic modular form} of tame level $N$ and weight $(k,\varepsilon)$,
defined over a $p$-adic ring $R$ (i.e., $R\simeq\varprojlim_m R/p^mR$) is a function $f$ on the formal completion
$\widehat{\rm Ig}(N)_{/R}$ of ${\rm Ig}(N)_{/R}$ satisfying
\[
f\vert\langle u\rangle(A,\eta):=f(A,\eta^{(p)},\eta_pu)=\varepsilon(u)u^kf(A,\eta),
\]
for all $u\in\Gamma^{\rm wt}$ and $[(A,\eta)]=[(A,\eta^{(p)},\eta_p)]\in\widehat{\rm Ig}(N)_{/R}$.
\end{defn}

Denote by $V_p(N;R)$ the space of $p$-adic modular forms of tame level $N$ 
defined over a $p$-adic ring $R$. Associated with every geometric modular form $f$
on $\Gamma_1(Np^\infty)$ over $R$ there is a $p$-adic modular form $\widehat{f}\in V_p(N;R)$ 
defined by the rule
\[
\widehat{f}(A,\eta)=f(A,\eta,\widehat{\omega}(\eta_p)),\quad\textrm{for all $[(A,\eta)]\in\widehat{\rm Ig}(N)_{/R}$},
\]
where $\widehat{\omega}(\eta_p)$ is the canonical differential on $A$ arising from the isomorphism
of formal groups $\widehat{\eta}_p:\widehat{\mathbf{G}}_m\simeq\widehat{A}$ induced by
$\eta_p:\boldsymbol{\mu}_{p^\infty}\hookrightarrow A[p^\infty]$.

\subsection{$\cR$-adic modular forms}
\label{subsec:crit}

Let $\ro$ be the ring of integers of a finite extension of $L/\bQ_p$, and set
$\Lambda^{\rm wt}_{\ro}=\ro[[\Gamma^{\rm wt}]]$. For any $k\in\bZ$ and
$\varepsilon:\Gamma^{\rm wt}\rightarrow\boldsymbol{\mu}_{p^\infty}$ let
\[
\k_{k,\varepsilon}:\Lambda_{\ro}^{\rm wt}\longrightarrow\overline{\bQ}_p
\]
be the $\ro$-algebra homomorphism defined by $u\mapsto\varepsilon(u)u^{k-2}$
for $u\in\Gamma^{\rm wt}$.

\begin{defn}\label{def:arith-primes}
Let $\cR$ be a $\Lambda^{\rm wt}_{\ro}$-algebra. We say that
a $\ro$-algebra homomorphism $\k:\cR\rightarrow\overline{\bQ}_p$ is an \emph{arithmetic prime}
if the composition
\[
\Lambda_{\ro}^{\rm wt}\longrightarrow\cR\overset{\k}\longrightarrow\overline{\bQ}_p
\]
is of the form $\k_{k,\varepsilon}$ for some $k\in\bZ_{\geqslant 2}$ and
$\varepsilon:\Gamma^{\rm wt}\rightarrow\boldsymbol{\mu}_{p^\infty}$.
We then say that $\k$ has weight
$(k,\varepsilon)$, omitting $\varepsilon$ from the notation if $\varepsilon=\mathds{1}$.
\end{defn}

We denote by $\mathcal{X}_{\ro}^a(\cR)$ the set of arithmetic primes of $\cR$, which
we may also view as a subset of ${\rm Spf}(\cR)(\overline{\bQ}_p)$, and
for each $\k\in\mathcal{X}_{\ro}^a(\cR)$ let $F_\k$ be the residue field of ${\rm ker}(\k)$,
and $\cO_\k\subseteq F_\k$ be the valuation ring.

\begin{defn}\label{def:hida}
Let $\psi_0:(\bZ/Np\bZ)^\times\rightarrow\ro^\times$ be a Dirichlet character modulo $Np$,
and let $\cR$ be a finite flat $\Lambda_{\ro}^{\rm wt}$-algebra.
\begin{enumerate}
\item{}
An \emph{$\cR$-adic modular form} of tame level $N$ and character $\psi_0$
is a formal $q$-expansion
\[
\mathbf{f}=\sum_{n=0}^\infty\mathbf{a}_nq^n\in\cR[[q]]
\]
such that for all 
$\k\in\mathcal{X}_{\ro}^a(\cR)$ of weight $(k,\varepsilon)$,
the $q$-series $\sum_{n}\k(\mathbf{a}_n)q^n$ 
is the $q$-expansion of a $p$-adic modular form $\mathbf{f}_\k\in V_p(N;\cO_\k)$
of weight $(k,\varepsilon)$.
Denote by $G(N,\psi_0;\cR)$ the $\cR$-module of such formal $q$-expansions.

\item{} We say that $\mathbf{f}\in G(N,\psi_0;\cR)$
is \emph{arithmetic} if, for all $\k\in\mathcal{X}_{\ro}^a(\cR)$,
$\mathbf{f}_\k=\widehat{f}_\k$ is the $p$-adic avatar of a classical modular form
\[
f_\k\in M_k(\Gamma_0(Np^s),\psi_0\varepsilon\omega^{2-k}),
\]
where $s>0$ is the power of $p$ in the conductor of $\varepsilon$,
and $\omega:(\bZ/p\bZ)^\times\rightarrow\bZ_p^\times$ is the Teichm\"uller character;
and say that $\mathbf{f}$ is \emph{cuspidal} if $f_\k$ is a cusp form for all $\k\in\mathcal{X}_{\ro}^a(\cR)$.
Denote by $S^a(N,\psi_0;\cR)\subseteq G(N,\psi_0;\cR)$ the submodule consisting of
cuspidal arithmetic $\cR$-adic modular forms.

\item{} We say that $\mathbf{f}\in S^a(N,\psi_0;\cR)$ is \emph{ordinary} if the
$U_p$-operator acts invertibly on $\mathbf{f}_\k$
for all $\k\in\mathcal{X}_{\ro}^a(\cR)$, and let $S^{\rm ord}(N,\psi_0;\cR)\subseteq S^a(N,\psi_0;\cR)$
be the corresponding submodule.
\end{enumerate}
\end{defn}

Define
\[
V_p(N,\psi_0;\cR):=V_p(N,\psi_0;\ro)\widehat{\otimes}_\ro\cR,
\]
and let $[z]:\bZ_p^\times\rightarrow\ro[[\bZ_p^\times]]^\times$ be the character
given by inclusion of group-like elements. The space $V_p(N,\psi_0;\cR)$ is thus equipped
with two different actions of $z\in\Gamma^{\rm wt}$:
one via the diamond operators $\langle z\rangle$ acting on the first factor of the above completed
tensor product, and the other via multiplication by $[z]$ on the second factor.

\begin{prop}
There is a canonical $\cR$-module isomorphism
\[
G(N,\psi_0;\cR)=\{\mathbf{f}\in V_p(N,\psi_0;\cR)\;\colon\;\mathbf{f}\vert\langle z\rangle=[z]\mathbf{f},
\quad\forall z\in\bZ_p^\times\}.
\]
\end{prop}

\begin{proof}
See 
\cite[Thm.~3.2.16]{Hida-GMF}.
\end{proof}

Thus by extension of scalars we may evaluate any $\mathbf{f}\in G(N,\psi_0;\cR)$
at a point $x\in\widehat{\rm Ig}(N)(\cR)$, producing an element
$\mathbf{f}(x)\in\cR$. This will be used in $\S\ref{sec:measures}$
to define measures associated with $\mathbf{f}$ which, for appropriate choices of $x$
(defined in $\S\ref{sec:CMpoints}$), 
interpolate special values of $L$-functions.

\subsection{Modular measures}\label{sec:measures}

Let $\cW$ be a finite extension of $\widehat{\bZ}_p^{\rm ur}$,
the completion of the ring of integers of the maximal unramified extension of $\bQ_p$,
and denote by ${\rm Cont}(\bZ_p,\cW)$ the space of continuous $\cW$-valued functions on $\bZ_p$.
Let
\[
{\rm Meas}(\bZ_p,\cW):={\rm Hom}_{\rm cts}({\rm Cont}(\bZ_p,\cW),\cW)
\]
be the space of $\cW$-valued measures of $\bZ_p$.
As usual, if $\mu\in{\rm Meas}(\bZ_p,\cW)$ and $\phi\in{\rm Cont}(\bZ_p,\cW)$,
we use the notation $\int_{\bZ_p}\phi(z){\rm d}\mu(z):=\mu(\phi)$.
\sk

The \emph{Amice transform} $\mathscr{A}_\mu(T)\in\cW[[T]]$ of a measure $\mu$
is the power series
\[
\mathscr{A}_\mu(T):=\sum_{m=0}^\infty c_m(\mu)T^m,
\]
where $c_m(\mu)=\int_{\bZ_p}\binom{z}{m}{\rm d}\mu(z)$.
One easily checks that
\[
\int_{\bZ_p}z^n{\rm d}\mu(z)=\left(T\frac{d}{dT}\right)^n\mathscr{A}_{\mu}(T)\bigr\vert_{T=0}
\]
for all $n\geqslant 0$, and by Mahler's theorem the rule $\mu\mapsto\mathscr{A}_\mu$
defines an isomorphism ${\rm Meas}(\bZ_p,\cW)\simeq\cW[[T]]$ of $p$-adic Banach algebras.

%
\sk

For $\cR=\Lambda_{\ro}^{\rm wt}$, any $\cR$-adic modular form
$\mathbf{f}=\sum_{n=0}^\infty\mathbf{a}_nq^n\in G(N,\psi_0;\cR)$
defines a measure $\mu_{\mathbf{f}}$ on $\Gamma^{\rm wt}$
with values in $V_p(N,\psi_0;\ro)$ by
\[
\int_{\Gamma^{\rm wt}}z^k{\rm d}\mu_{\mathbf{f}}(z)=\sum_{n=0}^\infty
\k_k(\mathbf{a}_n)q^n,
\]
for all $k\in\bZ$; by linearity, the same is true for modular forms over
any finite $\Lambda_{\ro}^{\rm wt}$-algebra $\cR$.
\sk

Let $d$ be the operator on $G(N,\psi_0;\cR)$ given by
\[
d:\sum_{n=0}^\infty\mathbf{a}_nq^n\mapsto\sum_{n=0}^\infty n\mathbf{a}_nq^n,
\]
and for each $m\in\bZ_{\geqslant 0}$ let $\binom{d}{m}$ denote the operator
given by $\sum_n\mathbf{a}_nq^n\mapsto\sum_n\binom{n}{m}\mathbf{a}_nq^n$.

\begin{defn}\label{def:mod-meas}
For any $\mathbf{f}\in G(N,\psi_0;{\cR})$ and $x\in\widehat{\rm Ig}(N)(\cR)$, let
$\mu_{\mathbf{f},x}$ be the $\cR$-valued measure on $\bZ_p$ determined by
\[
\int_{\bZ_p}\binom{z}{m}{\rm d}\mu_{\mathbf{f},x}(z)=\binom{d}{m}\mathbf{f}(x),
\]
for all $m\geqslant 0$.
\end{defn}

Setting $\mathbf{f}^\flat:=\sum_{(n,p)=1}\mathbf{a}_nq^n\in G(N,\psi_0;\cR)$, it is easy to
see that the associated measure $\mu_{\mathbf{f}^\flat,x}$ is then supported on $\bZ_p^\times$.

\subsection{CM points}\label{sec:CMpoints}

Let $K$ be an imaginary quadratic field of odd discriminant $-D_K<-3$, let $p>2$
be a prime split in $K$, and write
\[
p\cO_K=\pp\overline\pp,
\]
where $\pp$ is the prime of $K$ above $p$ induced by our fixed embedding
$\imath_p:\overline{\bQ}\hookrightarrow\bC_p$. We shall assume throughout that
$K$ satisfies the following \emph{Heegner hypothesis} relative to a fixed
integer $N>0$ prime to $p$:
\begin{equation}\label{HH}
\textrm{there is an ideal $\mathfrak{N}\subseteq\cO_K$ with $\cO_K/\mathfrak{N}\simeq\bZ/N\bZ$}.\tag{heeg}
\end{equation}
The existence of such $\mathfrak{N}$, which will be fixed from now on,
amounts to the requirement that every prime $q\mid N$ is either split in $K$
or it is ramified in $K$ with $q^2\nmid N$.
\sk

For each positive integer $c$ let $\cO_c=\bZ+c\cO_K$ be the order of $K$ of that conductor, and let
$H_c$ be the corresponding ring class field, so that ${\rm Gal}(H_c/K)\simeq{\rm Pic}(\cO_c)$ by
the Artin reciprocity map. 
For each $\cO_c$-ideal $\fa$ prime to $\mathfrak{N}\pp$, let
$A_{\fa}/H_c$ be the CM elliptic curve with the complex uniformization $A_\fa(\bC)=\bC/\fa^{-1}$, and
equip $A_\fa$ with the $\Gamma_1(Np^\infty)$-level structure
\[
\eta_a:\boldsymbol{\mu}_N\oplus\boldsymbol{\mu}_{p^\infty}\hookrightarrow A_\fa[N]\oplus A_\fa[p^\infty]
\]
defined in \cite[p.6]{cas-hsieh1}, where $a\in\widehat{K}^\times$ is chosen so that $a\widehat{K}\cap\cO_c=\fa$.
The pair $(A_\fa,\eta_a)$ defines a point $x_\fa\in{\rm Ig}(N)$ defined over
a discrete valuation ring inside $\mathcal{V}:=\imath_p^{-1}(\cO_{\bC_p})\cap K^{\rm ab}$,
where $K^{\rm ab}$ is the maximal abelian extension of $K$ in $\overline{\bQ}$.
We let $x_c$ denote the point $x_\fa$ for $\fa=\cO_c$.
\sk

Write $c=c_op^n$ with $p\nmid c_o$ and $n\geqslant 0$, and decompose $c_o=c_o^+c_o^-$ with $c_o^+$ (resp. $c_o^-$)
only divisible by primes which are split (resp. nonsplit) in $K$. We similarly decompose $N=N^+N^-$,
and set $\mathfrak{C}^+:=c_o^+\cO_K$ and $\mathfrak{N}^+:=N^+\cO_K$.
Fix a square-root $\sqrt{-D_K}\in K$, and set
\[
\vartheta:=(D_K+\sqrt{-D_K})/2.
\]
Following \cite[\S{2.4}]{cas-hsieh1}, we
define the matrix $\varsigma^{(\infty)}=(\varsigma_q)\in{\rm GL}_2(\widehat{\bQ})$ by
\begin{itemize}
\item{} $\varsigma_q=1$, if $q\nmid c_o^+N^+p$,
\item{} $\varsigma_q=(\overline{\vartheta}-\vartheta)^{-1}
\left(\begin{matrix}\overline\vartheta&\vartheta\\1&1\end{matrix}\right)$, 
if $q=\qq\overline{\qq}$ with $\qq\mid\mathfrak{C}^+\mathfrak{N}^+\pp$,
\end{itemize}
and the matrix $\gamma_{c}=(\gamma_{c,q})\in{\rm GL}_2(\widehat{\bQ})$ by
\begin{itemize}
\item{} $\gamma_{c,q}=1$, if $q\nmid cNp$,
\item{} $\gamma_{c,q}=
\left(\begin{matrix}q^{{\rm ord}_q(c)}&1\\0&1\end{matrix}\right)$, 
if $q=\qq\overline{\qq}$ with $\qq\mid\mathfrak{C}^+\mathfrak{N}^+\pp$,
\item{} $\gamma_{c,q}=
\left(\begin{matrix}1&0\\0&q^{{\rm ord}_q(c)-{\rm ord}_q(N)}\end{matrix}\right)$,
if $q\mid c_o^-N^-$,
\end{itemize}
and set $\xi_c:=\varsigma^{(\infty)}\gamma_c$. Under the complex uniformization
\[
[\cdot]:\mathfrak{H}\times{\rm GL}_2(\widehat\bQ)\longrightarrow{\rm Ig}(N)(\bC)
\]
deduced from $(\ref{eq:perfectoid})$ and the standard uniformization of $Y_1(Np^s)$,
the pair $(\vartheta,\xi_c)$ is sent to $x_{c}$. Moreover,
by Shimura's reciprocity law, 
if $a\in\widehat{K}^{(p)\times}$ and $\fa=a\widehat{\cO}_c\cap K$
is the associated fractional ideal of $\cO_{c}$, then
\[
[(\vartheta,\overline{a}^{-1}\xi_c)]=x_\fa=x_{c}^{\sigma_\fa}\in{\rm Ig}(N)(H_{c}(\pp^\infty)),
\]
where $\sigma_\fa={\rm rec}_K(a^{-1})\vert_{H_{c}(\pp^\infty)}\in{\rm Gal}(H_{c}(\pp^\infty)/K)$
is the Artin symbol of $\fa$ over the compositum of $H_{c}$ with the ray class
field of $K$ of conductor $\pp^\infty$, and $a\mapsto\overline{a}$ denoted
the action of the nontrivial automorphism $\tau\in{\rm Gal}(K/\bQ)$ on $\mathbb{A}_K$.

\subsection{Anticyclotomic Hecke characters}

We say that a Hecke character $\psi:K^\times\backslash\mathbb{A}_K^\times\rightarrow\bC^\times$
has infinity type $(\ell_1,\ell_2)$, with $\ell_1, \ell_2\in\frac{1}{2}\bZ$ such that $\ell_1-\ell_2\in\bZ$,
if
\[
\psi_\infty(z)=z^{\ell_1-\ell_2}(z\overline{z})^{\ell_2},
\]
where for each place $v$ of $K$, we let $\psi_v:K_v^\times\rightarrow\bC^\times$ be
the component of $\psi$ at $v$. The conductor of $\psi$ is the largest ideal $\mathfrak{c}\subseteq\cO_K$ such that
$\psi_\qq(u)=1$ for all $u\in(1+\mathfrak{c}\cO_{K,\qq})^\times\subseteq K_\qq^\times$.
If $\psi$ has conductor $\mathfrak{c}_\psi$ and
$\fa$ is any fractional ideal of $K$ prime to $\mathfrak{c}_\psi$,
we write $\psi(\fa)$ for $\psi(a)$, where $a\in\widehat{K}^\times$ is such that $a\widehat\cO_K\cap K=\fa$ and
$a_\qq=1$ for all $\qq$ dividing $\mathfrak{c}_\psi$.
As a function on fractional ideals, then $\psi$ satisfies
$\psi((\alpha))=\alpha^{\ell_2-\ell_1}(\alpha\overline{\alpha})^{-\ell_2}$
for all $\alpha\in K^\times$ with $\alpha\equiv 1\pmod{\mathfrak{c}_\psi}$.

\begin{defn}
If $\psi=\psi_{\rm fin}\psi_\infty$ is a Hecke character of $K$ with
has infinity type $(\ell_1,\ell_2)$, the \emph{$p$-adic avatar} of $\psi$ is the character
$\widehat{\psi}:K^\times\backslash\widehat{K}^\times\rightarrow\bC_p^\times$ defined by
\[
\widehat\psi(z)=\imath_p\imath_\infty^{-1}(\psi_{\rm fin}(z))z_\pp^{\ell_1}z_{\overline{\pp}}^{\ell_2}.
\]
Via the reciprocity map ${\rm rec}_K$, we shall often regard $\widehat\psi$
as a Galois character $\widehat\psi:G_K\rightarrow\bC_p^\times$.
\end{defn}

Let $H_{p^\infty}=\bigcup_nH_{p^n}$
be the ring class field of $K$ of conductor $p^\infty$, and set
$\widetilde{\Gamma}:={\rm Gal}(H_{p^\infty}/K)$.
We say that a Hecke character $\psi:K^\times\backslash\mathbb{A}_K^\times\rightarrow\bC^\times$
is \emph{anticyclotomic} if $\psi\vert_{\mathbb{A}^\times}=\mathds{1}$.
The infinity type of such $\psi$ is of the form $(\ell,-\ell)$, 
and the correspondence $\psi\mapsto\widehat\psi$ establishes a bijection between the set of 
anticyclotomic Hecke characters of $K$ of conductor dividing $p^\infty$ and the set of locally algebraic 
$\bC_p$-valued characters of $\widetilde{\Gamma}$.

\subsection{A two-variable anticyclotomic $p$-adic $L$-function}\label{subsec:Lp}

Throughout this section, we let $\mathbf{f}\in G(N,\psi_0;\cR)$ be an
$\cR$-adic modular form as in Definition~\ref{def:hida}. Also, let
$\lambda:K^\times\backslash\mathbb{A}_K^\times\rightarrow\ro^\times$ be a fixed
Hecke character of infinity type $(1,0)$ and conductor prime to $Np$.

\begin{defn}\label{def:critchar}
Let $\varepsilon_{\rm cyc}:G_{\bQ}\rightarrow\bZ_p^\times$ be the $p$-adic cyclotomic
character, and let $i\in\bZ/(p-1)\bZ$ be such that $\psi_0\vert_{(\bZ/p\bZ)^\times}=\omega^i$.
\begin{enumerate}
\item{} Define the \emph{critical character} $\Theta:G_\bQ\rightarrow\cR^\times$ by
\begin{equation}\label{def:crit}
\Theta(\sigma):=\omega^{i/2}(\sigma)\cdot[\langle\varepsilon_{\rm cyc}(\sigma)\rangle^{1/2}],\nonumber
\end{equation}
where $\langle\cdot\rangle^{1/2}:\bZ_p^\times\rightarrow\Gamma^{\rm wt}$ is the composition
of the projection $\bZ_p^\times\twoheadrightarrow\Gamma^{\rm wt}$ with the map
$x\mapsto x^{1/2}$.
\item{} Define the $\cR$-adic character $\boldsymbol{\chi}:K^\times\backslash\mathbb{A}_K^\times\rightarrow\cR^\times$ by
\[
\boldsymbol{\chi}(x):=\Theta({\rm rec}_{\bQ}({\rm N}_{K/\bQ}(x))).
\]
\item{} Denote by $\langle\lambda\rangle$ the composition of $\lambda$
with the projection onto the $\bZ_p$-free quotient of $\ro^\times$,
which then takes values in $\Gamma^{\rm wt}$, and
define $\boldsymbol{\xi}:K^\times\backslash\mathbb{A}_K^\times\rightarrow\cR^\times$ by
\begin{equation}\label{def:xi}
\bx(x)=\lambda^{1-\tau}(x)\cdot[\langle\lambda^{1-\tau}({x})\rangle^{1/2}],\nonumber
\end{equation}
where $\lambda^{1-\tau}(x):=\lambda(x)/\lambda(\overline{x})$.
\end{enumerate}
\end{defn}

\begin{rem}
Recall that we assume $p>2$ and note that implicit in Definition~\ref{def:critchar} is a
choice of a lift of $i$ to $\bZ/2(p-1)\bZ$; we fix either one of the two possible choices,
\emph{cf.} \cite[Rem.~2.1.3]{howard-invmath}.
\end{rem}

Assume that $c_o\cO_K$ is the conductor of $\lambda^{1-\tau}$, 
and for any $\cO_{c_o}$-ideal $\fa$ prime to $\mathfrak{N}\pp$, let
$\mu_{\F^\flat,\mathfrak{a}}$ be the measure $\mu_{\F^\flat,x}$ of Definition~\ref{def:mod-meas}
associated to the CM point $x_\fa\in{\rm Ig}(N)$ of $\S\ref{sec:CMpoints}$.
Setting $T=t-1$, we shall denote by $\mu_{\F_\fa^\flat}$ the measure
on $\bZ_p^\times$ characterized by
\[
\mathscr{A}_{\mu_{\F_\fa^\flat}}(T)=\mathscr{A}_{\mu_{\F^\flat,\fa}}((1+T)^{\mathbf{N}(\fa)^{-1}\sqrt{-D_K}^{-1}}-1),
\]
and if $\phi:\bZ_p^\times\rightarrow\ro^\times$ is any continuous character,
define $\mathbf{f}_\fa^\flat\otimes\phi(t)\in\cR[[t-1]]$ by
\begin{align*}
\mathbf{f}_\fa^\flat\otimes\phi(t)&=\int_{\bZ_p}\phi(x)t^x{\rm d}\mu_{\mathbf{f}_\fa^\flat}(x)\\
&=\sum_{m\geqslant 0}\left[\int_{\bZ_p}\phi(x)\binom{x}{m}{\rm d}\mu_{\mathbf{f}_\fa^\flat}(x)\right](t-1)^m.
\end{align*}

\begin{defn}
\label{def:2varL}
The \emph{two-variable anticyclotomic $p$-adic $L$-function} attached to $\mathbf{f}$ and $\bx$ is
the $\cR$-valued measure $\mathscr{L}_{\pp,\bx}(\mathbf{f})$ on $\widetilde{\ac}$ given by
\[
\mathscr{L}_{\pp,\bx}(\mathbf{f})(\phi)=
\sum_{[\fa]\in{\rm Pic}(\cO_{c_o})}\bx\bchi^{-1}(\fa)\mathbf{N}(\fa)^{-1}
\cdot\left(\mathbf{f}_{\fa}^\flat\otimes\phi\vert[\fa]\right)(A_\fa,\eta_\fa),
\]
for all $\phi:\widetilde{\Gamma}\rightarrow\bC_p^\times$,
where $\phi\vert[\fa]$ is the character on $\bZ_p^\times$ defined by
$\phi\vert[\fa](z):=\phi({\rm rec}_K(a){\rm rec}_{{\pp}}(z))$.
\end{defn}


Now we describe the interpolation property satisfied by $\mathscr{L}_{\pp,\bx}(\mathbf{f})$.
For the statement, recall that if $f=\sum_{n=1}^\infty a_n(f)q^n$ is a normalized newform
of weight $k>0$ and $\psi$ is an anticyclotomic Hecke character of conductor $c\cO_K$,
the Rankin $L$-series $L(f/K,\psi,s)$ is defined by the analytic continuation
of the Dirichlet $L$-series defined by
\[
L(f/K,\psi,s)=\zeta(2s+1-k)\sum_{\fa}\frac{a_{{\rm N}(\fa)}(f)\psi(\fa)}{{\rm N}(\fa)^s},
\]
for $s\in\bC$ with ${\rm Re}(s)>\frac{k+1}{2}$,
where the sum is over the integral ideals $\fa$ of $K$ with $(\fa,c\cO_K)=1$.
In terms of automorphic $L$-functions, we have
\begin{equation}\label{eq:relation-L}
L(f/K,\psi,s)=L\biggl(s-\frac{k-1}{2},\pi_K\otimes\psi\biggr),
\end{equation}
where $\pi_K$ is the base change to $K$ of the automorphic representation of ${\rm GL}_2(\mathbb{A})$
generated by $f$. Thus since $\pi_K\otimes\psi$ is self-dual, $L(f/K,\psi,s)$ satisfies
a functional equation relating is values at $s$ and $k-s$.

For any $\ro$-algebra homomorphism $\nu_k:\cR\rightarrow\overline{\bQ}_p$ 
with $k>0$ and $\psi$ an anticyclotomic Hecke character
of $K$ of conductor $c_op^n\cO_K$ with $p\nmid c_o$, define the $p$-adic multiplier $\mathscr{E}_\pp(f_\k,\psi)$ by
\[
\mathscr{E}_\pp(f_\k,\psi)=
\left\{
\begin{array}{ll}
\bigl(1-\frac{\k(\mathbf{a}_p)\psi_{\overline\pp}(p)}{p^{k/2}}\bigr)
\bigl(1-\frac{\psi_{\overline\pp}(p)\varepsilon_\nu(p)p^{k/2-1}}{\k(\mathbf{a}_p)}\bigr)&\textrm{if $n=0$;}\\
\frac{\varepsilon(\psi_\pp^{-1})}{p^{n}} & \textrm{if $n\geqslant 1$,}
\end{array}
\right.
\]
where $\varepsilon_\nu$ is the nebentypus of $f_\nu$, and set
\[
L^{\rm alg}(f_\nu/K,\psi,k/2):=\frac{\Gamma(k+\ell)\Gamma(\ell+1)}{(2\pi)^{k+2\ell+1}({\rm Im}\;\vartheta)^{k+2\ell}}
\cdot\frac{L(f_\k/K,\psi,k/2)}{\Omega_K^{2k+4\ell}},
\]
where $\Omega_K\in\bC^\times$ is a complex period attached to $K$ as in \cite[\S{2.5}]{cas-hsieh1}.

\begin{thm}\label{thm:bigLp}
Let $\nu=\nu_k$ for some $k>0$ and let $\widehat\phi$ be the $p$-adic avatar of an anticyclotomic Hecke character $\phi$ of $K$
of infinity type $(\ell,-\ell)$ with $\ell\geqslant 0$ and conductor $c_op^n\cO_K$ with $p\nmid c_o$.
Then:
\begin{align*}
\frac{\k(\mathscr{L}_{\pp,\bx}(\mathbf{f}))(\widehat\phi)^2}{\Omega_p^{2k+4\ell}}
&=L^{\rm alg}(f_\k/K,\xi_\k\phi,k/2)\cdot\mathscr{E}_\pp(f_\k,\x_\k\phi)^2\cdot
\phi(\mathfrak{N}^{-1})\cdot 2^3\cdot c_o\varepsilon(f_\k)\cdot w_K^2\sqrt{D_K},
\end{align*}
where $\varepsilon(f_\k)$ is the global root number of $f_\k$, $w_K:=\vert\cO_K^\times\vert$, and
$\Omega_p\in R_0^\times$ is a $p$-adic period as in \cite[\S{2.5}]{cas-hsieh1}.
\end{thm}

\begin{proof}
Let $\nu$ be as in the statement and set $f=f_\nu$. Then
\[
\Theta_\k(z)=z^{k/2-1}
\]
for all $z\in\bZ_p^\times$, and hence $\chi_\k(\fa)=\mathbf{N}(\fa)^{k/2-1}$. 
Specializing $\mathscr{L}_{\pp,\bx}(\mathbf{f})$ at $\k$ we thus see that
\[
\k(\mathscr{L}_{\pp,\bx}(\mathbf{f}))(\widehat\phi)=\sum_{[\fa]\in{\rm Pic}(\cO_{c_o})}
\x_\k(\fa)\mathbf{N}(\fa)^{-r}\cdot\left(\widehat{f}_\fa^\flat\otimes\phi\vert[\fa]\right)(A_\fa,\eta_\fa).
\]
Since $\xi_\k$ is the $p$-adic avatar of an anticyclotomic
Hecke character of infinity type $(k/2,-k/2)$, the above shows that $\k(\mathscr{L}_{\pp,\bx}(\mathbf{f}))$
agrees with the $\unr$-valued measure $\mathscr{L}_{\pp,\xi_\k}(f)$ on $\widetilde{\Gamma}$ 
constructed in \cite[\S{3.3}]{cas-hsieh1}, so the result follows from [\emph{loc.cit.}, Prop.~3.8]. (Note
that in \cite{cas-hsieh1} only cusp form of even weights $k\geqslant 2$ are considered, but the construction of
$\mathscr{L}_{\pp,\xi_\k}(f)$ readily extends to any $k\in\mathbf{Z}_{\geqslant 1}$, and the results
quoted from \cite{hsieh} are available in this level of generality.)
\end{proof}

\begin{cor}\label{cor:2varL}
For every $\nu=\nu_k$ with $k>0$, the function $\k(\mathscr{L}_{\pp,\bx}(\mathbf{f}))$
is not identically zero.
\end{cor}

\begin{proof}
As shown in the proof of Theorem~\ref{thm:bigLp}, the specialization
$\k(\mathscr{L}_{\pp,\bx}(\mathbf{f}))$ 
agrees with the $p$-adic $L$-function
$\mathscr{L}_{\pp,\xi_\k}(f)$ constructed in \cite[\S{3.3}]{cas-hsieh1} with $f=f_\nu$, and so
the result similarly follows from [\emph{loc.cit.}, Thm.~3.9]. 
\end{proof}

\section{Big logarithm maps}\label{sec:2varL}

In this section we construct a Perrin-Riou big logarithm map adapted to our global anticyclotomic setting.
Starting with \cite{PR115}, the cyclotomic theory of these maps has been widely studied in the literature; see e.g.
\cite{berger:Kato} and the references therein.
The construction we give here combines work of Ochiai \cite{Ochiai-Col} and Loeffler--Zerbes \cite{LZ2}.

\subsection{Ochiai's map for nearly ordinary deformations}\label{sec:nord}

We keep the notations introduced in $\S\ref{subsec:crit}$ and $\S\ref{subsec:Lp}$;
in particular, $\ro$ denotes the ring of integers of finite extension of $L/\bQ_p$ and
$\cR$ is a finite flat extension of $\Lambda^{\rm wt}_{\ro}=\ro[[\Gamma^{\rm wt}]]$.
We also identify $G_{\bQ_p}:={\rm Gal}(\overline{\bQ}_p/\bQ_p)$ with
the decomposition group $D_p\subseteq G_{\bQ}$
determined by our fixed embedding $\imath_p:\overline{\bQ}\hookrightarrow\overline{\bQ}_p$.


\begin{defn}\label{def:ochiai}
Let $\bT$ be a free $\cR$-module of rank $2$ equipped with a continuous linear action of $G_\bQ$.
We say that $\bT$ is a \emph{$p$-ordinary deformation} if:
\begin{enumerate}
\item[(i)]{} the action of $G_\bQ$ on ${\rm det}(\bT)$ is given
by $\Theta^{-2}\varepsilon_{\rm cyc}^{-1}$;
\item[(ii)]{} there exists a filtration as $G_{\bQ_p}$-modules
\begin{equation}\label{eq:ordGr}
0\longrightarrow \fil^+\bT\longrightarrow\bT\longrightarrow \fil^-\bT\longrightarrow 0
\end{equation}
with $\fil^{\pm}\bT$ free of rank $1$ over $\cR$, and
with the action on $\fil^+\bT$ being unramified.
\end{enumerate}
\end{defn}

Fix a $p$-ordinary deformation $\bT$ as in Definition~\ref{def:ochiai}, and
let $\Psi:G_{\bQ_p}\rightarrow\cR^\times$ be the unramified character giving the action of $G_{\bQ_p}$ on $\fil^+\bT$.
Let $\Gamma_{\rm cyc}$ 
be the Galois group of the cyclotomic $\bZ_p$-extension of $\bQ_p$,
and let $\Lambda_{\rm cyc}$ be the free $\bZ_p[[\Gamma_{\rm cyc}]]$-module of rank one
where $G_{\bQ_p}$ acts via the inverse of the canonical character
$G_{\bQ_p}\twoheadrightarrow\Gamma_{\rm cyc}\hookrightarrow\bZ_p[[\Gamma_{\rm cyc}]]^\times$.

\begin{defn}
Set $\cI:=\cR\widehat{\otimes}\bZ_p[[\Gamma_{\rm cyc}]]$.
The \emph{nearly $p$-ordinary deformation} associated to $\bT$
is the $\cI$-module
\[
\cT:=\bT\widehat{\otimes}_{\bZ_p}\Lambda_{\rm cyc}
\]
equipped with the diagonal $G_{\bQ_p}$-action. From $(\ref{eq:ordGr})$, $\cT$
fits in an exact sequence of $\cR[[G_{\bQ_p}]]$-modules
\[
0\longrightarrow \fil^+\cT\longrightarrow\cT\longrightarrow \fil^-\cT\longrightarrow 0
\]
with $\fil^{\pm}\cT:=\fil^{\pm}\bT\widehat{\otimes}_{\bZ_p}\Lambda_{\rm cyc}$.
\end{defn}

Let $\epsilon:\Gamma_{\rm cyc}\simeq 1+p\bZ_p$ be the isomorphism
induced by the $p$-adic cyclotomic character. We denote by
$\mathcal{X}_{\ro}^a(\Gamma_{\rm cyc})$ the set of continuous characters
$\sigma:\Gamma_{\rm cyc}\rightarrow\overline{\bQ}_p^\times$
of the form $\sigma=\epsilon^{w_\sigma}\sigma_o$
for some integer $w_\sigma\geqslant 0$, called the \emph{weight} of $\sigma$,
and some finite order character $\sigma_o$. We then say that $\sigma$ has \emph{conductor} $p^r$
if so does $\sigma_o$ seen as a character on $\bZ_p^\times$.
\sk

Recall the set $\mathcal{X}_{\ro}^a(\cR)$ from Definition~\ref{def:arith-primes}, and
for every pair $(\k,\sigma)\in\mathcal{X}_{\ro}^a(\cR)\times\mathcal{X}_{\ro}^a(\Gamma_{\rm cyc})$
let $\cO_{\k,\sigma}$ be the extension
of $\cO_\k$ generated by the values of $\sigma$. With a slight abuse,
we shall also denote by $\cO_{\k,\sigma}$ the free $\cO_{\k,\sigma}$-module of rank one where $G_{\bQ_p}$ acts via
the character $\sigma$. Define
\begin{align*}
T_{\k,\sigma}:=\cT\otimes_{\mathcal{I},\nu}\cO_{\k,\sigma},&
\quad\quad
V_{\k,\sigma}:=T_{\k,\sigma}\otimes_{\bZ_p}\bQ_p,\\
\fil^\pm T_{\k,\sigma}:=\fil^\pm\cT\otimes_{\mathcal{I},\nu}\cO_{\nu,\sigma},&
\quad\quad
\fil^{\pm}V_{\k,\sigma}:=\fil^\pm T_{\k,\sigma}\otimes_{\bZ_p}\bQ_p,
\end{align*}
and for every finite extension $F/\bQ_p$, let
\[
{\rm Sp}_{\k,\sigma}^{}:H^1(F,\fil^+\cT)\longrightarrow H^1(F,\fil^+T_{\k,\sigma})\longrightarrow H^1(F,\fil^+V_{\k,\sigma})
\]
be the induced maps on cohomology.
\sk

For $V$ a finite-dimensional $L$-vector space with a continuous linear action of $G_F$,
we denote by $\mathbf{D}_{{\rm dR},F}(V)$ the filtered $(L\otimes_{\bQ_p}F)$-module
\[
\mathbf{D}_{{\rm dR},F}(V):=(V\otimes_{\bQ_p}\mathbf{B}_{\rm dR})^{G_F},
\]
where $\mathbf{B}_{\rm dR}$ is Fontaine's ring of $p$-adic de Rham periods.
If $V$ is a de Rham $G_F$-representation (i.e.,
${\rm dim}_F\mathbf{D}_{{\rm dR},F}(V)={\rm dim}_{L}V$),
then for any finite extension $E/F$ there is a canonical isomorphism
$D_{{\rm dR},E}(V)=E\otimes_{F}D_{{\rm dR},F}(V)$.
Denote by $\langle\;,\;\rangle$ the de Rham pairing
\[
\langle\;,\;\rangle:\mathbf{D}_{{\rm dR},F}(V)
\times\mathbf{D}_{{\rm dR},F}(V^*(1))\longrightarrow L\otimes_{\bQ_p}F\longrightarrow\bC_p,
\]
where $V^*={\rm Hom}_L(V,L)$. Let $\mathbf{B}_{\rm cris}\subseteq\mathbf{B}_{\rm dR}$ be the crystalline period ring
and define
\[
\mathbf{D}_{{\rm cris},F}(V):=(V\otimes_{\bQ_p}\mathbf{B}_{\rm cris})^{G_F};
\]
this is an $(L\otimes_{\bQ_p}F_0)$-module equipped with the action of
crystalline Frobenius $\Phi$, where $F_0$ is the maximal unramified subfield of $F$.
When $F=\bQ_p$, we write  $\mathbf{D}_{\rm dR}(V)=\mathbf{D}_{{\rm dR},\bQ_p}(V)$ and
$\mathbf{D}_{\rm cris}(V)=\mathbf{D}_{{\rm cris},\bQ_p}(V)$.
If $V$ is a crystalline representation (i.e., ${\rm dim}_{F_0}\mathbf{D}_{{\rm cris}, F}(V)={\rm dim}_{L}V$),
then we have a canonical isomorphism $F\otimes_{F_0}\mathbf{D}_{{\rm cris},F}(V)=
\mathbf{D}_{{\rm dR},F}(V)$. Suppose further that
\[
\mathbf{D}_{{\rm cris},F}(V)^{\Phi=1}=\{0\}.
\]
Then we denote by $\log$ the Bloch--Kato logarithm map
\[
\log:=\log_{F,V} :H^1_{\rm f}(F,V)\longrightarrow
\frac{\mathbf{D}_{{\rm dR},F}(V)}{{\rm Fil}^0\mathbf{D}_{{\rm dR},F}(V)}
={\rm Fil}^0\mathbf{D}_{{\rm dR},F}(V^*(1))^\vee,
\]
where $H^1_{\rm f}(F,V)\subseteq H^1(F,V)$ is the Bloch--Kato subspace \cite[(3.7.2)]{BK},
and denote by $\exp^*$ the dual exponential map
\[
\exp^*:=\exp^*_{F,V}:H^1(F,V^*(1))\longrightarrow {\rm Fil}^0\mathbf{D}_{{\rm dR},F}(V^*(1))
\]
obtained by dualizing the Bloch--Kato exponential map
\[
{\rm exp}:={\rm exp}_{F,V}\colon
\frac{\mathbf{D}_{{\rm dR},F}(V)}{{\rm Fil}^0\mathbf{D}_{{\rm dR},F}(V)}
\longrightarrow H^1_{}(F,V)
\]
with respect to the de Rham and local Tate pairings (\emph{cf.}~\cite[\S 2.4]{LZ2}).

\begin{defn}
Let $\bT$ be a $p$-ordinary deformation, and set
\begin{equation}\label{def:D}
\mathbb{D}:=(\fil^+\bT\widehat{\otimes}_{\bZ_p}
\widehat{\bZ}_p^{\rm nr})^{G_{\bQ_p}},
\end{equation}
where the $G_{\bQ_p}$-action on $\fil^+\bT\widehat{\otimes}_{\bZ_p}
\widehat{\bZ}_p^{\rm nr}$ is the diagonal one. Also set
\[
\cD:=\mathbb{D}\widehat{\otimes}_{\bZ_p}\bZ_p[[\Gamma_{\rm cyc}]].
\]
\end{defn}

Fix a compatible system $(\zeta_{p^r})_r$ of $p$-power roots of unity.
Then as in \cite[Def.~3.12]{Ochiai-Col},
for every $(\k,\sigma)\in\mathcal{X}_{\ro}^a(\cR)\times\mathcal{X}_{\ro}^a(\Gamma_{\rm cyc})$
there are specialization maps
\begin{equation}\label{def:D}
{\rm Sp}^{}_{\k,\sigma}:
\cD\longrightarrow\mathbf{D}_{\rm dR}^{}(\fil^+V_{\k,\sigma}).
\end{equation}

\begin{thm}\label{thm:Exp}
Let $\gamma_o\in\Gamma_{\rm cyc}$ be a topological generator,
and define
\[
\mathcal{J}:=(\Psi({\rm Fr}_p)-1,\gamma_o-1)\subseteq\cI.
\]
For any finite unramified extension $F/\bQ_p$ with ring of integers $\cO_F$ there
exists an injective $\cI$-linear map
\[
\mathcal{E}^{\Gamma_{\rm cyc}}_{F}:\mathcal{J}(\mathcal{D}\otimes_{\bZ_p}\cO_F)
\longrightarrow H^1(F,\fil^+\cT)
\]
such that for every $\k\in\mathcal{X}_{\ro}^a(\cR)$ of weight $k\geqslant 2$
and $\sigma\in\mathcal{X}_{\ro}^a(\Gamma_{\rm cyc})$ of weight $w$ with $1\leqslant w\leqslant k-1$
and conductor $p^n$, the following diagram commutes:
\begin{equation}\label{eq:Expdiag}
\xymatrix{
\mathcal{J}(\mathcal{D}\otimes_{\bZ_p}\cO_F)\ar[rr]^-{\mathcal{E}^{\Gamma_{\rm cyc}}_{F}}
\ar[d]^-{{\rm Sp}^{}_{\k,\sigma}} &&  H^1(F,\fil^+{\cT})
\ar[d]^-{{\rm Sp}_{\k,\sigma}^{}} \nonumber\\
\mathbf{D}_{{\rm dR},F}(\fil^+{V}_{\k,\sigma}^{})\ar[rr]^-{} && H^1(F,\fil^+{V}_{\k,\sigma}),
}
\end{equation}
where the bottom horizontal map is given by
\[
(-1)^{w-1}(w-1)!\cdot{\rm exp}_{}\times
\left\{
\begin{array}{ll}
\bigl(1-\frac{p^{w-1}}{\Psi_\k({\rm Fr}_p)}\bigr)
\bigl(1-\frac{\Psi_\k({\rm Fr}_p)}{p^w}\bigr)^{-1}&\textrm{if $n=0$;}\\
\mathfrak{g}(\sigma_o^{-1})\bigl(\frac{p^{w-1}}{\Psi_\k({\rm Fr}_p)}\bigr)^{r}&\textrm{if $n\geqslant 1$.}
\end{array}
\right.
\]
\end{thm}

\begin{proof}
See \cite[Prop.~5.3]{Ochiai-Col}.
\end{proof}

\subsection{Going up the unramified $\bZ_p$-extension}
\label{sec:unr}

Let $U:={\rm Gal}(F_\infty/\bQ_p)$ be the Galois group of the unramified $\bZ_p$-extension of $\bQ_p$,
let $F_n$ be the subfield of $F_\infty$ with
${\rm Gal}(F_n/\bQ_p)\simeq\bZ/p^n\bZ$, and set $U_n:={\rm Gal}(F_\infty/F_n)$.
Let $y_n:\cO_{F_n}\rightarrow\cO_{F_n}[U/U_n]$ be
the $\bZ_p$-linear map defined by
\[
y_n(x)=\sum_{\sigma\in U/U_n}x^\sigma[\sigma^{-1}],
\]
and let $\mathcal{S}_n$ be the image of $y_n$.
\sk

For any $x\in\cO_{F_{n+1}}$, it is readily seen that the image of $y_{n+1}(x)$ in
$\cO_{F_{n+1}}[U/U_n]$ agrees with $y_n({\rm Tr}_{F_{n+1}/F_n}(x))$, and hence
passing to the inverse limits with respect to the trace maps, we obtain an isomorphism
\begin{equation}\label{eq:yager}
\varprojlim_ny_n\colon\varprojlim_n\cO_{F_n}\xrightarrow{\;\simeq\;}\cS_\infty:=\varprojlim_n\cS_n.
\end{equation}


\begin{prop}\label{lem:Yager}
The module $\cS_\infty$ is free of rank $1$ over $\bZ_p[[U]]$,
and it is identified with
\[
\{g\in\widehat{\cO}_{F_\infty}[[U]]\;\colon\;g^u=[u]g\;\textrm{ for all $u\in U$}\},
\]
where $\widehat{\cO}_{F_\infty}$ is
the completion of the ring of integers of $F_\infty$.
\end{prop}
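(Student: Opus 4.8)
The plan is to identify $\cS_\infty$ with the ``Yager module'' of norm-compatible $\bZ_p$-bases and then to invoke the normal basis theorem along the tower. First I would spell out the identification claimed in the statement: an element of $\varprojlim_n\cO_{F_n}$ is a sequence $(x_n)_n$ with ${\rm Tr}_{F_{n+1}/F_n}(x_{n+1})=x_n$, and under the isomorphism (\ref{eq:yager}) this corresponds to the sequence $(y_n(x_n))_n\in\cS_\infty=\varprojlim_n\cS_n$. Since $\hat{\cO}_{F_\infty}=\varprojlim_n\cO_{F_n}\hat\otimes\bZ_p$ and $\bZ_p[[U]]=\varprojlim_n\bZ_p[U/U_n]$, the compatible system $(y_n(x_n))_n$ defines an element $g\in\hat{\cO}_{F_\infty}[[U]]$; the defining relation $y_n(x^\sigma)=[\sigma]y_n(x)$ at finite level passes to the limit to give $g^u=[u]g$ for all $u\in U$. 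Conversely, any such $g$ has coefficients that are trace-compatible, so it lies in $\cS_\infty$. This gives the displayed description $\cS_\infty=\{g\in\hat{\cO}_{F_\infty}[[U]]\colon g^u=[u]g\ \forall u\in U\}$; one should note that this is exactly the condition characterizing $\cS_\infty$ inside $\hat{\cO}_{F_\infty}[[U]]$.

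Next I would prove freeness of rank $1$. The key input is that, since $F_\infty/\bQ_p$ is unramified, each finite layer $\cO_{F_n}$ is a free $\bZ_p[{\rm Gal}(F_n/\bQ_p)]=\bZ_p[U/U_n]$-module of rank $1$: one takes a normal integral basis, which exists for the unramified extension $F_n/\bQ_p$ (the residue extension of finite fields admits a normal basis, and this lifts). Fix a generator $\theta_n$ of $\cO_{F_n}$ over $\bZ_p[U/U_n]$. A cleaner route is to use (\ref{eq:yager}) directly: choose a norm-compatible sequence $\theta=(\theta_n)_n\in\varprojlim_n\cO_{F_n}$ whose image $\bar\theta_0$ in $\cO_{F_0}/p=\bF_p$ is nonzero (such a sequence exists because the trace maps $\cO_{F_{n+1}}/p\to\cO_{F_n}/p$ are surjective, as the residue extensions are separable, so one can build the compatible system by successive lifting). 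Then $y_n(\theta_n)$ is a generator of $\cS_n$ over $\bZ_p[U/U_n]$: indeed $y_n(\theta_n)\bmod p$ is a unit times a normal basis generator of $\bF_{p^n}$ over $\bF_p$ by the nonvanishing of $\bar\theta_0$ together with linear independence of characters, so $y_n(\theta_n)$ generates $\cS_n\otimes\bF_p$ and hence, by Nakayama, all of $\cS_n$ as a $\bZ_p[U/U_n]$-module; a free module of rank $1$ over the free rank-$1$ module $\cO_{F_n}$. Passing to the inverse limit, $g:=(y_n(\theta_n))_n$ generates $\cS_\infty$ over $\bZ_p[[U]]$, and freeness of rank $1$ follows since at each finite level the annihilator is trivial and $\bZ_p[[U]]=\varprojlim\bZ_p[U/U_n]$.

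The main obstacle I expect is the interchange of limits and the passage of the normal-basis property through the tower: one must check that the generators $\theta_n$ can be chosen compatibly under the trace maps (not merely that each layer separately has a normal basis), and that Nakayama applies uniformly so that the limit element $g$ is genuinely a $\bZ_p[[U]]$-module generator rather than merely a topological one. This is handled by working modulo $p$ throughout — where everything reduces to the statement that $\bF_{p^n}/\bF_p$ has a normal basis and that these can be chosen trace-compatibly, which follows from surjectivity of the trace on finite fields — and then lifting via completeness of $\hat{\cO}_{F_\infty}$ and the topological Nakayama lemma for the complete local ring $\bZ_p[[U]]$. The rest is bookkeeping with the explicit maps $y_n$ and the identification (\ref{eq:yager}).
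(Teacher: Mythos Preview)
The paper does not actually prove this proposition: it simply cites \cite[Prop.~3.2, Prop.~3.6]{LZ2}. Your proposal supplies a direct argument along the standard ``Yager module'' lines, which is presumably close to what Loeffler--Zerbes do. The identification of $\cS_\infty$ with $\{g\in\hat{\cO}_{F_\infty}[[U]]:g^u=[u]g\}$ is handled correctly: at each finite level the relation $y_n(x)^u=[u]y_n(x)$ is immediate, and conversely any $g\in\cO_{F_n}[U/U_n]$ with $g^u=[u]g$ has the form $y_n(a)$ by comparing coefficients; taking $u\in U_n$ in the limit forces the coefficients of $g$ to lie in $(\hat{\cO}_{F_\infty})^{U_n}=\cO_{F_n}$, so the description passes to $\cS_\infty$.

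There is, however, a genuine gap in your freeness argument. You claim that if $(\theta_n)$ is trace-compatible with $\bar\theta_0\neq 0$ in $\mathbf{F}_p$, then each $\bar\theta_n$ is a normal basis generator, and you justify this by ``linear independence of characters''. That is not the reason: Dedekind's lemma says the automorphisms are $\mathbf{F}_{p^{p^n}}$-linearly independent as functions, which does not by itself force the orbit of a given element to be $\mathbf{F}_p$-linearly independent. The claim is nevertheless true, and the correct argument is that $U/U_n\cong\bZ/p^n\bZ$ is a $p$-group, so $\mathbf{F}_p[U/U_n]\cong\mathbf{F}_p[T]/(T-1)^{p^n}$ is local with residue field $\mathbf{F}_p$. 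The free rank-$1$ module $\cO_{F_n}/p$ over this local ring has a generator exactly when the image modulo the maximal ideal $(\sigma-1)$ is nonzero; since $\ker({\rm Tr})=(\sigma-1)\cdot(\cO_{F_n}/p)$ in this situation, ``generator'' is equivalent to ``nonzero trace'', and your hypothesis ${\rm Tr}(\bar\theta_n)=\bar\theta_0\neq 0$ gives exactly this. With this correction the rest of your limit argument (surjectivity of trace along the unramified tower to build $(\theta_n)$, then topological Nakayama over $\bZ_p[[U]]$) goes through. A small slip: the residue field of $F_n$ is $\mathbf{F}_{p^{p^n}}$, not $\mathbf{F}_{p^n}$.
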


\begin{proof}
See \cite[Prop.~3.2, Prop.~3.6]{LZ2}.
\end{proof}



\subsection{A two-variable regulator map for ordinary deformations}

Let $G={\rm Gal}(L_\infty/\bQ_p)$ be the Galois group of the unique $\bZ_p^2$-extension of $\bQ_p$,
and note that $L_\infty=\bQ_{p,\infty}F_\infty$. 
As in $\S\ref{sec:nord}$, we let $\bT$ be a $p$-ordinary deformation in the sense of Definition~\ref{def:hida},
and let $\Psi:G_{\bQ_p}\rightarrow\cR^\times$ be the unramified character giving the action
on the unramified $\cR$-line $\mathscr{F}^+\bT\subseteq\bT$.

\begin{defn}
An arithmetic prime $\k\in\mathcal{X}_{\ro}^a(\cR)$ is \emph{exceptional} for $\bT$
if $\k=\k_2$ 
and $\Psi_\k({\rm Fr}_p)=1$.
\end{defn}

For any subquotient $\mathbb{M}$ of $\bT$ define
\[
H^1_{\rm Iw}(L_\infty,\mathbb{M}):=\varprojlim_LH^1(L,\mathbb{M}),
\]
where the limit is over the finite extensions $L/\bQ_p$ contained in $L_\infty$
with respect to the corestriction maps.

\begin{thm}\label{thm:Log}
Let $\lambda:=\Psi({\rm Fr}_p)-1$.
There is an injective $\cR[[G]]$-linear map
\[
\mathcal{L}^{G}:H_{\rm Iw}^1(L_\infty,\fil^+{\bT})
\longrightarrow\lambda^{-1}\cdot\cJ(\mathbb{D}\widehat{\otimes}_{\bZ_p}\widehat{\cO}_{F_\infty}[[G]])\nonumber
\]
such that for every $\mathfrak{Y}_\infty\in H_{\rm Iw}^1(L_\infty,\fil^+{\bT})$,
and for every non-exceptional $\k\in\mathcal{X}^a_{\ro}(\cR)$ of weight $k\geqslant 2$ and
Hodge--Tate character $\phi:G\rightarrow L^\times$ of conductor $p^n$ and Hodge--Tate
weight\footnote{In this paper, we adopt the convention that the Hodge--Tate weight
of $\varepsilon_{\rm cyc}$ is $+1$.
Thus the Hodge--Tate weights of a $p$-adic de Rham representation $V$ are the integers $w$
such that ${\rm Fil}^{-w}\mathbf{D}_{\rm dR}(V)\supsetneq{\rm Fil}^{-w+1}\mathbf{D}_{\rm dR}(V)$.}
$w$, with $1\leqslant w\leqslant k-1$, we have
\begin{align*}
\mathcal{L}^{G}(\mathfrak{Y}_\infty)(\k,\phi)=
\frac{(-1)^{w-1}}{(w-1)!}&\cdot{\rm log}(\k(\mathfrak{Y}_\infty)^{\phi^{-1}})
\times\left\{
\begin{array}{ll}
\bigl(1-\frac{\Psi_\k({\rm Fr}_p)}{p^w}\bigr)
\bigl(1-\frac{p^{w-1}}{\Psi_\k({\rm Fr}_p)}\bigr)^{-1} & \textrm{if $n=0$;}\\
\varepsilon(\phi)^{-1}\Psi_\k({\rm Fr}_p^n) & \textrm{if $n\geqslant 1$,}\\
\end{array}
\right.
\end{align*}
where 
$\varepsilon(\phi)$ is the $\varepsilon$-factor of $\phi$.
\end{thm}

\begin{proof}
For each $n\geqslant 0$, let
\[
\mathcal{E}^{\Gamma_{\rm cyc}}_{F_n}:
\mathcal{J}(\cD\otimes_{\bZ_p}\cO_{F_n})\longrightarrow H^1(F_n,\fil^+{\cT})
\]
be the big exponential map of Theorem~\ref{thm:Exp} for the unramified extension $F_n/\bQ_p$,
and using $(\ref{eq:yager})$ define
\[
\mathcal{E}^{G}:=
\varprojlim_n\mathcal{E}^{\Gamma_{\rm cyc}}_{F_n}:
\mathcal{J}(\mathcal{D}\widehat{\otimes}_{\bZ_p}\cS_\infty)\longrightarrow H^1_{\rm Iw}(F_\infty,\fil^+{\cT}).
\]
By Shapiro's lemma, we view $\mathcal{E}^G$ as taking values
in $H^1_{\rm Iw}(L_\infty,\fil^+{\bT})$.
%
Since each $\mathcal{E}^{\Gamma_{\rm cyc}}_{F_n}$ has cokernel killed by $\lambda$,
it is readily seen that $\mathcal{E}^G$ is an injective $\cR[[G]]$-linear map with cokernel
killed by $\lambda$, and hence given any $\mathfrak{Y}_\infty\in H_{\rm Iw}^1(L_\infty,\fil^+{\bT})$,
the assignment
\[
\mathcal{L}^G(\mathfrak{Y}_\infty):=\lambda^{-1}
\cdot(\mathcal{E}^G)^{-1}(\lambda\cdot\mathfrak{Y}_\infty)
\]
is a well-defined element in
\[
\lambda^{-1}\cdot\mathcal{J}(\mathcal{D}\widehat{\otimes}_{\bZ_p}\cS_\infty)
\hookrightarrow\lambda^{-1}\cdot\mathcal{J}(\mathcal{D}\widehat{\otimes}_{\bZ_p}\widehat{\cO}_{F_\infty}[[U]])
\simeq\lambda^{-1}\cdot\mathcal{J}(\mathbb{D}\widehat{\otimes}_{\bZ_p}\widehat{\cO}_{F_\infty}[[G]]).
\]
Thus constructed, the interpolation properties of $\mathcal{L}^G$
for each non-exceptional $\k\in\mathcal{X}_{\ro}^a(\cR)$
then follow as in \cite[Thm.~4.15]{LZ2}.
\end{proof}

\begin{defn}
Let $\mathbf{f}\in\cR[[q]]$ be an ordinary $\cR$-adic newform of tame level $N$ (prime to $p$).
We say that an arithmetic prime $\k\in\mathcal{X}_{\ro}^a(\cR)$ is \emph{$p$-old}
if $\F_\k$ is the $p$-stabilization of an ordinary newform of level $N$.
\end{defn}

Note that if $\k\in\mathcal{X}_{\ro}^a(\cR)$ has weight $k>2$ and trivial nebentypus, then $\k$ is $p$-old
(see \cite[Lemma~2.1.5]{howard-invmath}), and that any $p$-old arithmetic prime is also
non-exceptional.

\begin{cor}\label{thm:ERL}
Let $\k\in\mathcal{X}_{\ro}^a(\cR)$ be a $p$-old arithmetic prime.
Then for every Hodge--Tate character $\phi:G\rightarrow L^\times$ of Hodge--Tate weight
$w\leqslant 0$ and conductor $p^n$ we have
\begin{align*}
\mathcal{L}^{G}(\mathfrak{Y}_\infty)(\k,\phi)=
(-w)!&\cdot{\rm exp^*}(\k(\mathfrak{Y}_\infty)^{\phi^{-1}})
\times\left\{
\begin{array}{ll}
\bigl(1-\frac{\Psi_\k({\rm Fr}_p)}{p^w}\bigr)
\bigl(1-\frac{p^{w-1}}{\Psi_\k({\rm Fr}_p)}\bigr)^{-1} & \textrm{if $n=0$;}\\
\varepsilon(\phi)\Psi_\k({\rm Fr}_p^n) & \textrm{if $n\geqslant 1$.}\\
\end{array}
\right.
\end{align*}
\end{cor}

\begin{proof}
The specialization of $\mathcal{L}^G$ at $\k$ gives rise to
an $\cO_\k[[G]]$-linear map
\[
\mathcal{L}_{\k}^G:H^1_{\rm Iw}(L_\infty,\fil^+{V}_\k)
\longrightarrow\mathbf{D}_{\rm dR}(\fil^+{V}_\k)\otimes_{\bZ_p}\widehat{\cO}_{F_\infty}[[G]]
\]
which by Theorem~\ref{thm:Log} enjoys the same interpolation properties
at a dense set characters of $G$ as the map $\mathcal{L}_{V}^G$ constructed
in \cite[Thm.~4.7]{LZ2} for $V=\fil^+{V}_\k$.
(Note that since $\k$ is $p$-old, $\fil^+V_\k$
is indeed a crystalline $G_{\bQ_p}$-representation.)
Since $\mathcal{L}_{\fil^+{V}_\k}^G$ is uniquely determined by its values
at such characters (for every given class in $H^1_{\rm Iw}(L_\infty,\fil^+{V}_\k)$),
the result follows from \cite[Thm.~4.15]{LZ2}.
\end{proof}

\section{Big Heegner points}
\label{sec:HP}

Let $f\in S_k(\Gamma_0(N))$ be a $p$-ordinary newform of level $N$ prime to $p>3$,
and let $K/\bQ$ be an imaginary quadratic field as in $\S\ref{sec:CMpoints}$;
in particular, $K$ satisfies condition (heeg) relative to $N$. Let $L/\bQ_p$ be a finite extension
with ring of integers $\ro$ containing the Fourier coefficients of $f$.
In this section, we briefly recall Howard's construction
of big Heegner points associated to the ordinary $\cR$-adic newform passing through $f$.

\subsection{Galois representations associated to Hida families}\label{subsec:Gal-Hida}

Let ${X_s}_{/\bQ}$ be the compactified modular curve whose non-cuspidal points classify
isomorphism classes of triples $(E,C,\pi)$ with:
\begin{itemize}
\item{} $E_{}$ is an elliptic curve over an arbitrary $\bQ$-scheme $S$;
\item{} $C$ is a cyclic subgroup of $E$ of order $N$;
\item{} $\pi$ is a point of $E$ of exact order $p^s$.
\end{itemize}

Let $J_s:={\rm Pic}^0(X_s)\otimes_{\bZ}\ro$ be the Jacobian of $X_s$, denote by $\mathfrak{h}_s$ the $\ro$-algebra
generated by the Hecke operators $T_\ell$ ($\ell\nmid Np$), $U_\ell$ ($\ell\mid Np$), and $\langle a\rangle$
($a\in(\bZ/N\bZ)^\times$) acting of $J_s$ by Albanese functoriality, and let
\[
e^{\rm ord}:=\lim_{m\to\infty}U_p^{m!}
\]
be Hida's ordinary projector. By \cite[Thm.~1.1]{hida86b}, the algebra
$\mathfrak{h}^{\rm ord}:=\varprojlim_se^{\rm ord}\mathfrak{h}_s$ is finite flat over $\Lambda_{\ro}^{\rm wt}$.
Let $\mathfrak{h}^{\rm ord}_{\mathfrak{m}}$ be the local summand of $\mathfrak{h}^{\rm ord}$ through which the algebra homomorphism
$\lambda_f:\mathfrak{h}^{\rm ord}\rightarrow\ro$ defined by $f$ factors, let
$\mathfrak{a}\subseteq\mathfrak{h}^{\rm ord}_{\mathfrak{m}}$ be the unique minimal prime
containing the kernel of $\lambda_f$, and set
\[
\mathbb{I}:=\mathfrak{h}^{\rm ord}_{\mathfrak{m}}/\mathfrak{a}.
\]
Letting $\mathbf{a}_n\in\cR$ be the image of $T_n\in\mathfrak{h}^{\rm ord}$,
the formal $q$-expansion $\mathbf{f}=\sum_{n\geqslant 1}\mathbf{a}_nq^n\in\cR[[q]]$
is an ordinary $\cR$-adic newform of tame level $N$ and character $\omega^{k-2}$
in the sense of Definition~\ref{def:hida}.
\sk

Let $\kappa_L$ be the residue field of $L$ and denote by
$\bar{\rho}_f:G_\bQ\rightarrow{\rm GL}_2(\kappa_L)$ the semisimple
residual representation attached to $f$.

\begin{thm}\label{thm:MT}
Assume that 
$\bar{\rho}_f$ is irreducible and $p$-distinguished. Then the following hold:
\begin{itemize}
\item{}
The module
\[
\mathbf{T}:=\biggl(\varprojlim_se^{\rm ord}({\rm Ta}_p(J_s)\otimes_{\bZ_p}\ro)\biggr)\otimes_{\mathfrak{h}^{\rm ord}}\cR
\]
is free of rank $2$ over $\cR$.
\item{} The Galois representation
\[
\rho_{\F}:G_\bQ\longrightarrow{\rm Aut}_\cR(\mathbf{T})\simeq{\rm GL}_2(\cR)
\]
is unramified outside $Np$ with
\[
{\rm trace}(\rho_{\F})({\rm Fr}^{-1}_\ell)=\mathbf{a}_\ell,
\quad\quad{\rm det}(\rho_{\F})({\rm Fr}^{-1}_\ell)=\ell[\ell],
\]
for all $\ell\nmid Np$, where ${\rm Fr}^{-1}_\ell$ is an arithmetic Frobenius.
\item{}
As a representation of $G_{\bQ_p}$, there is an exact sequence
\begin{equation}\label{eq:ordGr2}
0\longrightarrow\fil^+\mathbf{T}\longrightarrow\mathbf{T}\longrightarrow\fil^-\mathbf{T}\longrightarrow 0
\end{equation}
with $\fil^{\pm}\mathbf{T}\simeq\cR$, and with the action of $G_{\bQ_p}$
on $\fil^-\mathbf{T}$ given by the unramified character $\alpha:G_{\bQ_p}\rightarrow\cR^\times$
sending ${\rm Fr}_p^{-1}$ to $\mathbf{a}_p$.
\end{itemize}
\end{thm}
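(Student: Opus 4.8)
The plan is to obtain both parts from Hida's theory of ordinary $\Lambda$-adic representations, in the form developed in \cite{hida86b} and adapted to the present tower in \cite{Ochiai-Col} and \cite[\S 2]{howard-invmath}. The construction begins by noting that each ${\rm Ta}_p^{\rm ord}(J_s)=e^{\rm ord}({\rm Ta}_p(J_s)\otimes_{\bZ_p}\cO)$ carries commuting actions of $G_\bQ$ and of $e^{\rm ord}\mathfrak{h}_s$, and that the degeneracy maps $X_{s+1}\to X_s$ induce, by Albanese functoriality, transition maps ${\rm Ta}_p^{\rm ord}(J_{s+1})\to{\rm Ta}_p^{\rm ord}(J_s)$ that are $G_\bQ$-equivariant and Hecke-compatible. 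Hence $\mathbf{T}_0:=\varprojlim_s{\rm Ta}_p^{\rm ord}(J_s)$ is a $\mathfrak{h}^{\rm ord}[G_\bQ]$-module, and $\mathbf{T}=\mathbf{T}_0\otimes_{\mathfrak{h}^{\rm ord}}\cR$ via the surjection $\mathfrak{h}^{\rm ord}\to\mathfrak{h}^{\rm ord}_{\mathfrak{m}}\to\cR$.

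For $(i)$, one first checks, using Hida's control theorem and Poincar\'e duality on the $J_s$, that after inverting $p$ and specializing at a single arithmetic prime the module $(\mathbf{T}_0)_{\mathfrak{m}}\otimes\cR$ recovers the two-dimensional $p$-adic Galois representation of the corresponding classical eigenform, so it has generic rank $2$. The freeness of rank $2$ over $\cR$ then follows from the Gorenstein property of $\mathfrak{h}^{\rm ord}_{\mathfrak{m}}$, which itself rests on the absolute irreducibility of $\bar{\rho}_f$ together with a multiplicity-one argument; alternatively one works directly with the self-dual pairing on the ordinary part of the \'etale cohomology of the tower, as in \cite{Ochiai-Col}. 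Once $\mathbf{T}$ is free of rank $2$, choosing a basis defines $\rho_{\mathbf{f}}:G_\bQ\to\mathbf{GL}_2(\cR)$; it is unramified outside $Np$ because each $J_s$ has good reduction there, and the trace and determinant formulas are obtained by passing to the inverse limit in the Eichler--Shimura congruence relations $T_\ell={\rm Fr}_\ell+\ell\langle\ell\rangle{\rm Fr}_\ell^{-1}$ on $J_s$ for $\ell\nmid Np$, with $[\ell]$ recording the limit of the diamond operators $\langle\ell\rangle$.

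For $(ii)$, we restrict to $G_{\bQ_p}$. On each $J_s$, the connected--\'etale sequence of the $p$-divisible group of the N\'eron model over $\bZ_p$, combined with Hida's description of the ordinary part, produces a short exact sequence of $G_{\bQ_p}$-modules with unramified quotient on which $U_p$ acts as geometric Frobenius; these sequences are compatible under the transition maps, so passing to the limit and tensoring with $\cR$ gives the exact sequence $(\ref{eq:ordGr2})$. Each of $\fil^{\pm}(\mathbf{T})$ is free of rank $1$ over $\cR$ because it is a $\cR$-direct summand of the free rank-$2$ module $\mathbf{T}$ over the local ring $\cR$ whose specialization at arithmetic primes has rank $1$, hence is projective and therefore free; and $G_{\bQ_p}$ acts on $\fil^-(\mathbf{T})$ through the unramified character $\alpha$ determined by $\alpha({\rm Fr}_p^{-1})=\mathbf{a}_p$, the image of $U_p$ in $\cR$.

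The main obstacle is the rank-$2$ freeness in $(i)$: it is not formal and genuinely uses the irreducibility of $\bar{\rho}_f$, via either the Gorenstein property of the local Hecke algebra or a self-duality argument on \'etale cohomology. The remaining ingredients---the compatible Galois actions through the tower, the Eichler--Shimura identities, and the ordinary filtration coming from the connected--\'etale sequence---only require assembling standard facts in a manner compatible with the inverse limits. Since all of this is available in the literature, in the paper it suffices to refer to \cite{hida86b}, \cite{Ochiai-Col}, and \cite[\S 2]{howard-invmath}.
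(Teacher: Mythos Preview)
Your sketch is correct and captures the standard argument, but the paper's own proof is far more terse: it simply records that the result is well-known and points to \cite[Thm.~7]{Mazur-Tilouine} for $(i)$ and \cite[Thm.~2.2.2]{wiles88} for $(ii)$, with no further discussion. Your expansion of the argument---freeness via the Gorenstein property of the local Hecke algebra (resting on the irreducibility of $\bar{\rho}_f$), the Eichler--Shimura relations for the trace and determinant, and the ordinary filtration from the connected--\'etale sequence in the limit---is exactly what those references establish, so in content you are aligned with the paper. The only discrepancy is bibliographic: you cite \cite{hida86b}, \cite{Ochiai-Col}, and \cite{howard-invmath}, whereas the paper attributes $(i)$ to Mazur--Tilouine and $(ii)$ to Wiles; the latter pair are the more canonical citations for these precise statements, and your references (particularly \cite{Ochiai-Col}) treat closely related but not identical formulations.
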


\begin{proof}
This follows from \cite[Thm.~7]{Mazur-Tilouine} 
and \cite[Thm.~2.2.2]{wiles88}. 
\end{proof}

\subsection{Howard's big Heegner points}
\label{subsec:bigHP}

Fix a positive integer $c$ prime to $Np$.
The CM points $x_{cp^n}\in{\rm Ig}(N)(\bC)$ constructed in $\S\ref{sec:CMpoints}$ descend to points
$P_{cp^n,s}\in X_s(H_{cp^n}(\boldsymbol{\mu}_{p^s}))$, for all $n\geqslant s$.

\begin{lem}\label{lem:points}\hfill
\begin{enumerate}
\item{} For all $\sigma\in{\rm Gal}(H_{cp^n}(\boldsymbol{\mu}_{p^s})/H_{cp^n})$, we have
\[
P_{cp^n,s}^\sigma=\langle\vartheta(\sigma)\rangle\cdot P_{cp^n,s},
\]
where $\vartheta:{\rm Gal}(H_{cp^n}(\boldsymbol{\mu}_{p^s})/H_{cp^{n}})\rightarrow\bZ_p^\times/\{\pm{1}\}$ is such that
$\vartheta^2=\varepsilon_{\rm cyc}$.
\item{} If $n\geqslant s> 1$, then
\[
\sum_{\sigma\in{\rm Gal}(H_{cp^n}(\boldsymbol{\mu}_{p^s})/H_{cp^{n-1}}(\boldsymbol{\mu}_{p^s}))}
\alpha_s(P_{cp^n,s}^{{\sigma}})
=U_p\cdot P_{cp^n,s-1},
\]
where $\alpha_s:X_s\rightarrow X_{s-1}$ is the map given by $(E,C,\pi)\mapsto (E,C,p\cdot\pi)$
on non-cuspidal moduli.
\item{} If $n\geqslant s\geqslant 1$, then
\[
\sum_{\sigma\in{\rm Gal}(H_{cp^n}(\boldsymbol{\mu}_{p^s})/H_{cp^{n-1}}(\boldsymbol{\mu}_{p^s}))}
P_{cp^n,s}^{{\sigma}}
=U_p\cdot P_{cp^{n-1},s}.
\]
\end{enumerate}
\end{lem}

\begin{proof}
From the construction of $x_{cp^n}$, 
it is immediate to see
that the point $P_{cp^n,s}$ for $n\geqslant s$ agrees with the point $h_{cp^{n-s},s}\in X_s(\bC)$
corresponding to the triple $(A_{cp^{n-s},s},\nn_{cp^{n-s},s},\pi_{cp^{n-s},s})$ with:
\begin{itemize}
\item{} $A_{cp^{n-s},s}(\bC)=\bC/\cO_{cp^{n}}$;
\item{} $\nn_{cp^{n-s},s}=A_{cp^{n-s},s}[\mathfrak{N}\cap\cO_{cp^{n}}]$;
\item{} $\pi_{cp^{n-s},s}$ a generator of the kernel of the cyclic $p^s$-isogeny
$\bC/\cO_{cp^{n}}\rightarrow\bC/\cO_{cp^{n-s}}$,
\end{itemize}
as constructed in \cite[\S{2.2}]{howard-invmath}.
The proof of properties (1), (2) and (3) 
thus follows from (the proof of) Corollary~2.2.2, Lemma~2.2.4 and Proposition~2.3.1 of \cite{howard-invmath},
respectively.
\end{proof}

Set $L_{c,s}:=H_{cp^s}(\boldsymbol{\mu}_{p^s})$, and let $e_i$ denote the
idempotent of $\bZ_p[[\bZ_p^\times]]$ projecting to the $\omega^i$-th isotypical component.
As in \cite[Cor.~2.2.2]{howard-invmath}, it follows easily from
Lemma~\ref{lem:points} that the points $e_{k-2}e^{\rm ord}P_{cp^{t+1+s},s}$
define classes
\[
y_{cp^{t+1},s}\in e^{\rm ord}J_s(L_{cp^{t+1},s})
\]
which satisfy
\begin{equation}\label{eq:gal}
y_{cp^{t+1},s}^\sigma=\Theta(\sigma)\cdot y_{cp^{t+1},s},
\quad\textrm{for all $\sigma\in{\rm Gal}(L_{cp^{t+1},s}/H_{cp^{t+1+s}})$},
\end{equation}
where $\Theta$ is the character defined in $(\ref{def:crit})$,
viewed as acting on $J_s$ via the diamond operators.

\begin{defn}
For any $\Lambda_{\ro}^{\rm wt}$-module $M$ equipped with a linear $G_{\bQ}$-action,
we let $M^\dagger$ denote its twist by the character $\Theta^{-1}$.
\end{defn}

Thus by (\ref{eq:gal}) we have
\[
y_{cp^{t+1},s}\in H^0(H_{cp^{t+1+s}},e^{\rm ord}J_{s}(L_{cp^{t+1},s})^\dagger).
\]

For any $m>0$, let $\mathfrak{G}_{H_m}$ be the Galois group of the maximal extension of $H_{m}$
unramified outside the primes above $Np$. By Lemma~\ref{lem:points},
the image of $y_{cp^{t+1},s}$ under the composite map
\begin{align*}
H^0(H_{cp^{t+1+s}},e^{\rm ord}J_{s}^{\rm ord}(L_{cp^{t+1},s})^\dagger)
&\xrightarrow{{\rm Cor}}H^0(H_{cp^{t+1}},e^{\rm ord}J_{s}(L_{cp^{t+1},s})^\dagger)\\
&\xrightarrow{{\rm Kum}}H^1(\mathfrak{G}_{H_{cp^{t+1}}},e^{\rm ord}{\rm Ta}_p(J_s)^\dagger)
\end{align*}
defines a class $\mathfrak{X}_{cp^{t+1},s}$ satisfying
\[
\alpha_{s*}\mathfrak{X}_{cp^{t+1},s}=U_p\cdot\mathfrak{X}_{cp^{t+1},s-1}
\]
under the map
\[
\alpha_{s*}:H^1(\mathfrak{G}_{H_{cp^{t+1}}},e^{\rm ord}{\rm Ta}_p^{}(J_{s})^\dagger)
\longrightarrow H^1(\mathfrak{G}_{H_{cp^{t+1}}},e^{\rm ord}{\rm Ta}_p^{}(J_{s-1})^\dagger)
\]
induced by $\alpha_s:X_s\rightarrow X_{s-1}$ by Albanese functoriality.

\begin{defn}
The \emph{big Heegner point of conductor $cp^{t+1}$} is the class
\[
\mathfrak{X}_{cp^{t+1}}\in H^1(\mathfrak{G}_{H_{cp^{t+1}}},\mathbf{T}^\dagger)
\]
defined as the image of $\varprojlim_sU_p^{-s}\cdot\mathfrak{X}_{cp^{t+1},s}$
under the natural map
\[
\varprojlim_sH^1(\mathfrak{G}_{H_{cp^{t+1}}},e^{\rm ord}{\rm Ta}_p^{}(J_s)^\dagger)
\longrightarrow H^1(\mathfrak{G}_{H_{cp^{t+1}}},\mathbf{T}^\dagger).
\]
By inflation, we shall view $\mathfrak{X}_{cp^{t+1}}$ as a class in $H^1(H_{cp^{t+1}},\mathbf{T}^\dagger)$.
\end{defn}

By \cite[Prop.~2.3.1]{howard-invmath}, the classes
\begin{equation}\label{eq:bigHP}
\mathfrak{Z}_{c,t}:=U_p^{-t}\cdot\mathfrak{X}_{cp^{t+1}}\in H^1(H_{cp^t},\mathbf{T}^\dagger)
\end{equation}
are compatible under the corestriction maps, thus defining a class
\[
\mathfrak{Z}_{c,\infty}:=\varprojlim_t\mathfrak{Z}_{c,t}\in H^1_{\rm Iw}(H_{cp^\infty},\mathbf{T}^\dagger). 
\]

The maximal free quotient of ${\rm Gal}(H_{cp^\infty}/K)$ is the Galois group
$\Gamma$ of the anticyclotomic $\bZ_p$-extension $K_\infty/K$, and so
for every character $\chi$ of $\Delta_c:={\rm ker}({\rm Gal}(H_{cp^\infty}/K)\twoheadrightarrow\Gamma)$,
we obtain a class
\[
\mathfrak{Z}_{c,\infty}^\chi\in H^1_{\rm Iw}(K_\infty,\mathbf{T}^\dagger\otimes\chi).
\]
For $c=1$ and $\chi=\mathds{1}$ this is the Iwasawa cohomology class $\mathfrak{Z}_\infty$
considered in \cite[\S{3.3}]{howard-PhD-I}. 

\section{Explicit reciprocity law}
\label{sec:comparison}

As in previous sections, let $\Gamma={\rm Gal}(K_\infty/K)$ (resp. $G={\rm Gal}(L_\infty/\bQ_p)$)
be the Galois group of the anticyclotomic $\bZ_p$-extension of $K$ (resp. the unique $\bZ_p^2$-extension of $\bQ_p$).
We assume that $p=\pp\overline{\pp}$ splits in $K$ 
and for each $v\mid p$ in $K$ with a slight abuse of notation
we let $K_{\infty,v}$ be the completion of $K_\infty$ at a fixed prime above $v$.

\subsection{Regulator map for the anticyclotomic $\bZ_p$-extension of $K$}

Recall the $\cR$-adic Hecke character $\boldsymbol{\xi}:K^\times\backslash\mathbb{A}_K^\times\rightarrow\cR^\times$
in (\ref{def:xi}), based on the choice of an $\ro$-valued Hecke character
$\lambda$ of $K$ of conductor prime to $Np$ and infinity type $(1,0)$.
With a slight abuse of notation, we also let $\boldsymbol{\xi}:G_K\rightarrow\cR^\times$ be the character
defined by
\[
\boldsymbol{\xi}(\sigma):=[\langle\widehat{\lambda}(\sigma)\widehat{\lambda}^{-1}(\tau\sigma\tau)\rangle^{1/2}]
\]
where $\tau\in G_K$ is the nontrivial automorphism of ${\rm Gal}(K/\bQ)$,
and set
\begin{equation}\label{def:tw}
\mathbb{T}:=\mathbf{T}\vert_{G_K}\otimes\Theta^{-1}\boldsymbol{\xi}^{-1}\varepsilon_{\rm cyc}^{-1}.
\end{equation}

By Theorem~\ref{thm:MT}, if $v$ is a place of $K$ above $p$,
the restriction of $\bT$ to a decomposition group at $v$
takes the form
\begin{equation}\label{eq:ord}
\bT_{\vert{G_{K_v}=G_{\bQ_p}}}\;:\;
\begin{pmatrix}
\alpha^{-1}\Theta\boldsymbol{\xi}^{-1} & * \\
0 & \alpha\Theta^{-1}\boldsymbol{\xi}^{-1}\varepsilon_{\rm cyc}^{-1}
\end{pmatrix}
\end{equation}
on a suitable $\cR$-basis. Since $\alpha^{-1}\Theta\boldsymbol{\xi}^{-1}$ is an unramified character of $G_{K_{\pp}}$,
the representation $(\ref{eq:ord})$ for $v=\pp$ is an ordinary deformation in the sense of
Definition~\ref{def:ochiai}, and hence associated with it
we may consider the regulator map $\mathcal{L}^G$ 
of Theorem~\ref{thm:Log}.
\sk

In the following, we let $\bT$ be the representation $(\ref{eq:ord})$ for $v=\pp$ and
identify ${\rm Gal}(K_{\infty,\pp}/K_{\pp})$ with $\Gamma$ via $G_{\bQ_p}=G_{K_{\pp}}\hookrightarrow G_K$.
Recall the module $\mathbb{D}$ of Definition~\ref{def:D}.



\begin{lem}\label{lem:ohta-wake}
There exists an element $\omega_{\F}^\vee\in\mathbb{D}$
such that
\[
\langle\k(\omega_\F^\vee),\omega_{f_\k}\rangle_{}=1
\]
for all $\k\in\mathcal{X}_{\ro}^a(\cR)$, where $\omega_{f_\k}\in\mathbf{D}_{\rm dR}(\fil^+V_\k)$ is
the differential associated to the $p$-stabilized newform $f_\k$.
\end{lem}

\begin{proof}
This is \cite[Prop.~10.1.1(1)]{KLZ2}.
\end{proof}

\begin{prop}\label{anticylo-reg}
Let $\lambda:=\mathbf{a}_p\cdot\Theta\boldsymbol{\xi}^{-1}({\rm Fr}_p)-1\in\cR$
and set $\widetilde{\cR}:=\cR[\lambda^{-1}]\otimes_{\bZ_p}\widehat{\cO}_{F_\infty}$.
There exists an $\widetilde{\cR}[[\Gamma]]$-linear map
\[
{\rm tw}_{-1}\mathcal{L}_{\omega_\F}^{\Gamma}:H^1_{\rm Iw}(K_{\infty,{\pp}},\fil^+\bT(1))
\longrightarrow\widetilde{\cR}[[\Gamma]]
\]
with the following interpolation property. Let $\mathfrak{Y}_\infty\in H^1_{\rm Iw}(K_{\infty,{\pp}},\fil^+\bT(1))$
and let $\widehat\phi:\Gamma\rightarrow L^\times$ be the $p$-adic avatar of
an anticyclotomic Hecke character of $K$ of conductor $p^n$ and infinity type $(\ell,-\ell)$.
\begin{enumerate}
\item[$(i)$]{} If $\k\in\mathcal{X}_{\ro}^a(\cR)$ is non-exceptional and $\ell\leqslant 0$, then:
\[
{\rm tw}_{-1}\mathcal{L}_{\omega_\F}^\Gamma(\mathfrak{Y}_\infty)(\k,\widehat\phi)=
\frac{\varepsilon(\widehat\phi^{-1})}{\k(\mathbf{a}^n_p)\chi_\k\x_\k^{-1}(p_{\pp}^n)}
\cdot\frac{\mathscr{P}^*(\k,\phi_{}^{-1})}{\mathscr{P}(\k,\phi_{})}
\cdot\frac{(-1)^{\ell}}{(-\ell)!}\cdot{\rm log}_{/\omega_{f_\k}}(\k(\mathfrak{Y}_\infty)^{\widehat\phi_{}^{-1}});
\]
\item[$(ii)$] If $\k\in\mathcal{X}_{\ro}^a(\cR)$ is $p$-old and $\ell>0$, then:
\[
{\rm tw}_{-1}\mathcal{L}_{\omega_\F}^\Gamma(\mathfrak{Y}_\infty)(\k,\widehat\phi_{})=
\frac{\varepsilon(\widehat\phi^{-1}_{})}{\k(\mathbf{a}^n_p)\chi_\k\x_\k^{-1}(p_\pp^n)}
\cdot\frac{\mathscr{P}^*(\k,\phi^{-1})}{\mathscr{P}(\k,\phi_{})}
\cdot(\ell-1)!\cdot{\rm exp}^*_{/\omega_{f_\k}}(\k(\mathfrak{Y}_\infty)^{\widehat\phi_{}^{-1}}),
\]
\end{enumerate}
where
\begin{align*}
\frac{\mathscr{P}^*(\k,\phi^{-1})}{\mathscr{P}(\k,\phi)}&=
\left\{
\begin{array}{ll}
\frac{\bigl(1-\k(\mathbf{a}_p)\chi_\k\xi_\k^{-1}\phi^{-1}(\pp)\bigr)}
{\bigl(1-p\k(\mathbf{a}_p)^{-1}\chi_\k\xi_\k^{-1}\phi(\pp)\bigr)}
&\textrm{if $n=0$;}\\
1&\textrm{if $n\geqslant 1$,}
\end{array}
\right.
\end{align*}
and ${\rm log}_{/\omega_{f_\k}}$ and ${\rm exp}^*_{/\omega_{f_\k}}$
denote the Bloch--Kato logarithm and dual exponential maps paired against $\omega_{f_\k}$.
\end{prop}

\begin{proof}
In view of $(\ref{eq:ord})$, the action of $G_{\bQ_p}$ on $\fil^+\bT$ is given by the unramified character
sending 
${\rm Fr}_p$ to $\mathbf{a}_p\cdot\Theta\boldsymbol{\xi}^{-1}({\rm Fr}_p)
=\mathbf{a}_p\cdot\boldsymbol{\chi}\boldsymbol{\xi}^{-1}(p_\pp)$, and so by
Theorem~\ref{thm:Log} and Lemma~\ref{lem:ohta-wake} we may consider the map
\[
\mathcal{L}_{\omega_\F}^G:H^1_{\rm Iw}(L_\infty,\fil^+\bT)\longrightarrow\widetilde{\cR}[[G]]
\]
defined by the rule
\[
\mathcal{L}^G(-)=\mathcal{L}_{\omega_\F}^G(-)\cdot({\omega}_\F^\vee\otimes 1).
\]

Let ${\rm tw}_{-1}\mathcal{L}^G_{\omega_{\mathbf{f}}}$ be the composite map
\begin{align*}
{\rm tw}_{-1}\mathcal{L}^G_{\omega_{\mathbf{f}}}
:H^1_{\rm Iw}(L_\infty,\fil^+\bT(1))
&\xrightarrow{\otimes (\zeta_{p^r})_r^{-1}}
H^1_{\rm Iw}(L_\infty,\fil^+\bT)
\xrightarrow{\mathcal{L}_{\omega_{\F}}^G}\widetilde{\cR}[[G]]
\xrightarrow{{\rm Tw}_{\varepsilon_{\rm cyc}^{-1}}}\widetilde{\cR}[[G]],
\end{align*}
where
\begin{itemize}
\item{} $(\zeta_{p^r})$ 
is the $\cR$-linear map induced by $(x_r\;{\rm mod}\;p^r)\mapsto (x_r\otimes\zeta_{p^r}\;{\rm mod}\;p^r)$;
\item{} ${\rm Tw}_{\varepsilon_{\rm cyc}^{-1}}$ is the $\cR$-linear
isomorphism given by $g\mapsto\varepsilon_{\rm cyc}^{-1}(g)g$ for $g\in G$,
\end{itemize}
and let $\mathbb{J}$ be the kernel of the natural projection
$\cR[[G]]\twoheadrightarrow\cR[[\Gamma]]$. The corestriction map
\[
H^1_{\rm Iw}(L_\infty,\fil^+\bT(1))/\mathbb{J}\longrightarrow
H^1_{\rm Iw}(K_{\infty,\pp},\fil^+\bT(1))
\]
is injective, and its cokernel is $H^2(L_\infty,\fil^+\bT(1))[\mathbb{J}]$, which
vanishes since $H^0(K_{\infty,\pp},\fil^+\bT(1))=\{0\}$ (as one can see e.g. by the argument
right before \cite[Lem.~5.5]{cas-hsieh1}). Quotienting ${\rm tw}_{-1}\mathcal{L}_{\omega_\F}^G$
by $\mathbb{J}$ we thus obtain a map
\[
{\rm tw}_{-1}\mathcal{L}_{\omega_\F}^\Gamma:
H^1(K_{\infty,\pp},\fil^+\bT(1))\simeq H^1(L_\infty,\fil^+\bT(1))/\mathbb{J}\longrightarrow\widetilde{\cR}[[\Gamma]]
\]
with the desired properties following from Theorem~\ref{thm:Log} and Corollary~\ref{thm:ERL}.
\end{proof}

\subsection{Explicit reciprocity law for big Heegner points}

Let $\mathscr{L}_{\pp,\xi}(\mathbf{f})$ be the two-variable $p$-adic $L$-function
constructed in $\S\ref{subsec:Lp}$, and let $c\cO_K$ be the conductor of $\lambda^{1-\tau}$,
where $\lambda$ is as used in the construction of the $\cR$-adic character $\bx$.
On the other hand, let $\mathfrak{Z}_{c,\infty}\in H^1_{\rm Iw}(H_{cp^\infty},\mathbf{T}^\dagger)$
be Howard's systems of big Heegner points as recalled in $\S\ref{subsec:bigHP}$.
As shown in the proof of \cite[Prop.~2.4.5]{howard-invmath}, for each place $v\mid p$
the restriction ${\rm res}_v(\mathfrak{Z}_{c,\infty})$ lies in the kernel of the natural map
\[
H^1_{\rm Iw}(H_{cp^\infty,v},\mathbf{T}^\dagger)
\longrightarrow H^1_{\rm Iw}(H_{cp^\infty,v},\fil^-\mathbf{T}^\dagger)
\]
induced by $(\ref{eq:ordGr2})$. In particular, by \cite[Lem.~2.4.4]{howard-invmath} the class
${\rm res}_\pp(\mathfrak{Z}_\infty)$ is the image of
a unique class in $H^1_{\rm Iw}(H_{cp^\infty,\pp},\fil^+\mathbf{T}^\dagger)$ which we shall
still denote in the same manner. Moreover, since
$\mathbf{T}^\dagger\otimes\boldsymbol{\xi}^{-1}=\bT(1)$ by (\ref{def:tw}),
the twist $\mathfrak{Z}_{c,\infty}^{\boldsymbol{\xi}^{-1}}$ lies in $H^1_{\rm Iw}(H_{cp^\infty},\bT(1))$;
in the following, we let $\mathfrak{Z}_{c,\infty}^{\boldsymbol{\xi}^{-1}}$ be the image of this
class in $H^1_{\rm Iw}(K_{\infty},\bT(1))$ under corestriction, so that in particular
we have ${\rm res}_\pp(\mathfrak{Z}_{c,\infty}^{\boldsymbol{\xi}^{-1}})\in H^1_{\rm Iw}(K_{\infty,\pp},\fil^+\bT(1))$.

\begin{thm}\label{thm:equality}
The following equality holds in $\widetilde{\cR}[[\Gamma]]$:
\[
{\rm tw}_{-1}\mathcal{L}_{\omega_{\F}}^\Gamma({\rm res}_\pp(\mathfrak{Z}_{c,\infty}^{\boldsymbol{\xi}^{-1}}))
=\mathscr{L}_{{\pp},\bx}(\F)\cdot\sigma_{-1,\pp},
\]
where $\sigma_{-1,\pp}:={\rm rec}_\pp(-1)\vert_{K_\infty}\in\Gamma$.
\end{thm}

The proof of Theorem~\ref{thm:equality} will be an immediate consequence of the following result.

\begin{prop}\label{prop:t}
Let $\k\in\mathcal{X}_{\ro}^a(\cR)$ be an arithmetic prime of weight $(2,\varepsilon)$
with $\varepsilon:\Gamma^{\rm wt}\rightarrow\boldsymbol{\mu}_{p^\infty}$ of conductor $p^s$,
and let $\widehat{\phi}:\Gamma\rightarrow L^\times$
be the $p$-adic avatar of an anticyclotomic Hecke character $\phi$ of $K$ of infinity type $(1,-1)$ and
conductor $p^n$. If $n\geqslant s$, then
\[
\mathscr{L}_{\pp,\bx}(\F)(\k,\widehat{\phi}^{-1}_{})=
\frac{\phi_\pp(-1)\varepsilon(\phi_\pp)}{\k(\mathbf{a}_p^n)\chi_\k\xi_\k^{-1}({\rm Fr}_p^n)}
\cdot{\rm log}_{/\omega_{f_\k}}({\rm res}_{\pp}(\k(\mathfrak{Z}_{c,\infty})^{\x_\k^{-1}\widehat{\phi}})).
\]
\end{prop}

\begin{proof}
Our hypotheses imply that the character $\xi_\k\phi^{-1}$ has finite order and it
factors through the ${\rm Gal}(H_{cp^{n+1}}/K)$.
By the same calculation as in the proof of \cite[Thm.~4.9]{cas-hsieh1}
(see esp. [\emph{loc.cit.}, (4.8)]) we obtain
\begin{equation}\label{eq:calc1}
\mathscr{L}_{\pp,\bx}(\F)(\k,\widehat\phi^{-1})
=\mathfrak{g}(\phi_{\pp}^{-1})p^{-n}\phi_\pp(p^n)
\sum_{\sigma\in{\rm Gal}(H_{cp^{n+1}}/K)}\xi_\k^{-1}\phi(\sigma)\chi_\k^{-1}(\sigma)\cdot
d^{-1}\widehat{f}_\k^{\flat}(P_{cp^{n+1},s}^\sigma),
\end{equation}
where $d^{-1}\widehat{f}_\k$ is the $p$-adic modular form
\[
d^{-1}\widehat{f}_\k^\flat:=\lim_{t\to -1}d^{t}\widehat{f}_\k^\flat=\sum_{(n,p)=1}\k(\mathbf{a}_n)n^{-1}q^n.
\]

To proceed with the proof, we need to recall the definition of the Frobenius operator
${\rm Frob}$ on the space $V_p(N;R)$ of $p$-adic modular forms, 
where we take $R$ to be a complete discrete valuation ring containing $\cO_\k$.
If $x=[(A,\eta^{(p)},\eta_p)]$ is a point in $\widehat{\rm Ig}(N)_{/R}$ with
\[
(\eta^{(p)},\eta_p):\boldsymbol{\mu}_N\oplus\boldsymbol{\mu}_{p^\infty}\hookrightarrow A[N]\oplus A[p^\infty],
\]
then $\eta_p$ amounts to giving an isomorphism
$\widehat{\eta}_p:\widehat{A}\simeq\widehat{\mathbf{G}}_m$ of formal groups over $R$,
and we set
\[
{\rm Frob}(x):=(A_0,\eta_0^{(p)},\eta_{0,p}),
\]
where:
\begin{itemize}
\item{} $A_0:=A/\eta_p(\boldsymbol{\mu}_p)$ is the quotient of $A$ by its canonical subgroup;
\item{} $\eta_0^{(p)}:=\lambda_0\circ\eta^{(p)}:\boldsymbol{\mu}_N\hookrightarrow A_0[N]$,
where $\lambda_0:A\rightarrow A_0$ is the natural projection;
\item{} $\eta_{0,p}:\boldsymbol{\mu}_{p^\infty}\hookrightarrow A_0[p^\infty]$
induces $\widehat{\eta}_{0,p}:=\widehat{\eta}_p\circ\widehat{\mu}_0$, 
where $\widehat{\mu}_0:\widehat{A}_0\simeq\widehat{A}$ is the isomorphism of formal groups
induced by the dual isogeny $\mu_0=\lambda_0^\vee$.
\end{itemize}
The action of ${\rm Frob}$ on $V_p(N;R)$ is then defined in the obvious manner,
setting
\[
{\rm Frob}(g)(x):=g({\rm Frob}(x)),
\]
for every $g\in V_p(N;R)$ and $x\in\widehat{\rm Ig}(N)_{/R}$.

Now let $F_{\omega_{f_\k}}$ be the Coleman primitive of the differential
$\omega_{f_\k}$ vanishing at the cusp $\infty$;
this is a locally analytic $p$-adic modular form (as defined in \cite[p.~1083]{bdp1})
of weight $0$ satisfying
\[
d F_{\omega_{f_\k}}=\omega_{f_\k}
\]
and characterized by the further requirement that
\begin{equation}\label{eq:Colequation}
F_{\omega_{f_\k}}-\frac{\k(\mathbf{a}_p)}{p}{\rm Frob}(F_{\omega_{f_\k}})
=d^{-1}\widehat{f}_\k^\flat
\end{equation}
(\emph{cf.} \cite[Cor.~2.8]{cas-inv}). In particular, note that
$U_pF_{\omega_{f_\k}}=\frac{\k(\mathbf{a}_p)}{p}F_{\omega_{f_\k}}$.

Let $F_{n,s}$ be a finite extension of $\imath_p(L_{cp^{n+1},s})$ in $\overline{\bQ}_p$
such that the base-change $X_s\times_{\bQ_p}F_{n,s}$ admits a stable model.
The calculation in \cite[Prop.~2.9]{cas-inv}
applies to $f$ and the classes
\[
\Delta_{cp^{n+1},s}:=(P_{cp^{n+1},s})-(\infty),\quad
\Delta_{cp^{n+1+s},s}:=(P_{cp^{n+1+s},s})-(\infty)
\]
in $J_s(F_{n,s})$, yielding the formulae
\begin{equation}\label{eq:higherAJ}
\begin{split}
{\rm log}_{\omega_{f_\k}}(\Delta_{cp^{n+1},s})
&=F_{\omega_{f_\k}}(P_{cp^{n+1},s}),\quad
{\rm log}_{\omega_{f_\k}}(\Delta_{cp^{n+1+s},s})
=F_{\omega_{f_\k}}(P_{cp^{n+1+s},s}),
\end{split}
\end{equation}
where ${\rm log}_{\omega_{f_\k}}:J_s(F_{n,s})\rightarrow\bC_p$ is
the formal group logarithm associated with $\omega_{f_\k}$.

Now define
$Q_{cp^{n+1},s}
\in J_s(L_{cp^{n+1},s})\otimes_{\bZ}F_\k$ by
\begin{equation}\label{eq:defQt}
Q_{cp^{n+1},s}=\sum_{\sigma\in{\rm Gal}(H_{cp^{n+1+s}}/H_{cp^{n+1}})}
\Delta_{cp^{n+1+s},s}^{\tilde{\sigma}}\otimes\chi_\k^{-1}(\tilde{\sigma}),
\end{equation}
where for each $\sigma\in{\rm Gal}(H_{cp^{n+1+s}}/H_{cp^{n+1}})$,
$\tilde{\sigma}$ is an arbitrary lift of $\sigma$ to ${\rm Gal}(L_{cp^{n+1},s}/H_{cp^{n+1}})$;
by $(\ref{eq:gal})$, the point $Q_{cp^{n+1},s}$ does not depend on the particular choice of lift.
Taking lifts $\tilde{\sigma}$ in $(\ref{eq:defQt})$ which act trivially on $\boldsymbol{\mu}_{p^s}$ (as we may, since
$H_{cp^{n+1+s}}\cap H_{cp^{n+1}}(\boldsymbol{\mu}_{p^s})=H_{cp^{n+1}}$) and extending the map
${\rm log}_{\omega_{f}}$ by $F_\k$-linearity, we deduce from $(\ref{eq:higherAJ})$ that
\begin{equation}\label{hor-comp}
\begin{split}
{\rm log}_{\omega_{f_\k}}({Q}_{cp^{n+1},s})
&=\sum_{\tau\in{\rm Gal}(L_{cp^{n+1},s}/H_{cp^{n+1}}(\boldsymbol{\mu}_{p^s}))}
F_{\omega_{f_\k}}(P_{cp^{n+1+s},s}^{\tau})\\
&=F_{{\omega_{f_\k}}}(U_p^s\cdot P_{cp^{n+1+s},s})\\
&=\left(\frac{\k(\mathbf{a}_p)}{p}\right)^s
\cdot F_{\omega_{f_\k}}(P_{cp^{n+1},s}),
\end{split}
\end{equation}
using Lemma~\ref{lem:points} for the second equality.
Substituting $(\ref{hor-comp})$ into $(\ref{eq:calc1})$ and using $(\ref{eq:Colequation})$
we thus arrive at
\begin{equation}\label{eq:calc3}
\begin{split}
\mathscr{L}_{\pp,\bx}(\F)(\k,\widehat\phi^{-1})
&=\mathfrak{g}(\phi_{\pp}^{-1})p^{-n}\phi_\pp(p^n)\\
&\quad\times\left(\frac{p}{\k(\mathbf{a}_p)}\right)^s
\sum_{\sigma\in{\rm Gal}(H_{cp^{n+1}}/K)}\x_\k^{-1}\phi\chi_\k^{-1}(\sigma)
\cdot {\rm log}_{\omega_{f}}(Q_{cp^{n+1},s}^\sigma).
\end{split}
\end{equation}

Recall that $\mathbf{T}^\dagger$ denotes the twist $\mathbf{T}\otimes\Theta^{-1}$,
and note that $\mathbf{T}^\dagger\otimes_{\cR}F_\k\simeq\mathbf{T}\otimes_{\cR}F_\k$
as $G_{\bQ(\boldsymbol{\mu}_{p^s})}$-representations.
By Hida's control theorem (see e.g. \cite[Thm.~3.1(i)]{hida86b}),
the natural map $\mathbf{T}\rightarrow\mathbf{T}\otimes_{\cR}F_\k$ factors as
\[
\mathbf{T}\longrightarrow e^{\rm ord}{\rm Ta}_p(J_s)\longrightarrow\mathbf{T}\otimes_{\cR}F_\k,
\]
and tracing through the definition of $\mathfrak{X}_{cp^{n+1}}$ in $\ref{subsec:bigHP}$
we see that the image of ${Q}_{cp^{n+1},s}$ under the induced map
\begin{equation}\label{eq:inducedkum}
J_s(L_{cp^{n+1},s})\otimes_{}F_\k
\xrightarrow{e^{\rm ord}\circ{\rm Kum}}H^1(L_{cp^{n+1},s},e^{\rm ord}{\rm
Ta}_p(J_s)\otimes_{}F_\k)\longrightarrow H^1(L_{cp^{n+1},s},\mathbf{T}\otimes_{\cR}F_\k)\nonumber
\end{equation}
agrees with the image of $U_p^s\cdot\k(\mathfrak{X}_{cp^{n+1}})$
under the restriction
\[
H^1(H_{cp^{n+1+s}},\mathbf{T}^\dagger\otimes_{\cR}F_\k)\longrightarrow
H^1(L_{cp^{n+1},s},\mathbf{T}^\dagger\otimes_{\cR}F_\k)\simeq H^1(L_{cp^{n+1},s},\mathbf{T}\otimes_{\cR}F_\k),
\]
and hence
\begin{equation}\label{eq:diag}
{\rm log}_{\omega_{f}}(Q_{cp^{n+1},s})
=\left(\frac{\k(\mathbf{a}_p)}{p}\right)^s\cdot{\rm log}_{/\omega_f}
({\rm res}_{{\pp}}(\k(\mathfrak{X}_{cp^{n+1}})))
\end{equation}
by the compatibility between the map $\log_{/\omega_f}$
in Proposition~\ref{anticylo-reg} and $\log_{\omega_f}$ (see \cite[\S{3.10.1}]{BK}).

Note that $\varepsilon(\phi_\pp)=\mathfrak{g}(\phi_\pp^{-1})\phi_\pp(-p^n)$. Thus
substituting $(\ref{eq:diag})$ into $(\ref{eq:calc3})$ and using $(\ref{eq:bigHP})$ for the second equality,
we conclude that
\begin{align*}
\mathscr{L}_{\pp,\boldsymbol{\xi}}(\F)(\k,\widehat{\phi}^{-1})
&=\frac{\phi_\pp(-1)\varepsilon(\phi_\pp)}{\chi_\k\xi_\k^{-1}(p_\pp^n)}
\sum_{\sigma\in{\rm Gal}(H_{cp^{n+1}}/K)}\x_\k^{-1}\phi(\sigma)\cdot
{\rm log}_{/\omega_{f_\k}}({\rm res}_{{\pp}}(\k(\mathfrak{X}_{cp^{n+1}})^\sigma))\\
&=\frac{\phi_\pp(-1)\varepsilon(\phi_\pp)}{\k(\mathbf{a}_p^n)\chi_\k\xi_\k^{-1}(p_\pp^n)}
\cdot{\rm log}_{/\omega_{f_\k}}({\rm res}_{\pp}(\k(\mathfrak{Z}_{c,\infty})^{\x_\k^{-1}\widehat{\phi}})),
\end{align*}
as was to be shown.
\end{proof}

\begin{proof}[Proof of Theorem~\ref{thm:equality}]
In light of Proposition~\ref{anticylo-reg}, the content of Proposition~\ref{prop:t}
amounts to the equality
\[
{\rm tw}_{-1}\mathcal{L}_{\omega_{\F}}^\Gamma({\rm res}_\pp(\mathfrak{Z}_{c,\infty}^{\boldsymbol{\xi}^{-1}}))(\k,\widehat{\phi}^{-1})
=(\mathscr{L}_{{\pp},\bx}(\F)\cdot\sigma_{-1,\pp})(\k,\widehat{\phi}^{-1}),
\]
for all pairs $(\k,\phi)$ as in the statement of that result.
Since an element in $\widetilde{\cR}[[\Gamma]]$ is uniquely determined by its values
at such set of pairs, the result follows.
\end{proof}

\section{Applications}
\label{sec:arith-applic}

\subsection{Preparations}

Let $f\in S_{k}(\Gamma_1(N))$ be a $p$-ordinary newform of weight $k>0$
and level $N$ prime to $p$ defined over a finite extension $L$ of $\bQ_p$,
and let $K$ be an imaginary quadratic field satisfying hypothesis (\ref{eq:heeg-hyp}) relative to $N$
and in which $p=\pp\overline{\pp}$. Let $\chi$ be the $p$-adic avatar of an anticyclotomic Hecke character of $K$
of infinity type $(j,-j)$ with $j-k/2\in\bZ$, and set
\[
V_{f,\chi}:=V_f(k/2)\vert_{G_K}\otimes\chi.
\]
For $S$ a finite set of places of $K$ containing the primes above $Np$, and for
every finite extension $F/K$ inside $\overline{\bQ}$,
let $\mathfrak{G}_{F,S}$ be the Galois group of the maximal extension of $F$ unramified outside
the places above $S$. Recall that the \emph{Bloch--Kato Selmer group} ${\rm Sel}(F,V_{f,\chi})$
is defined by
\begin{equation}\label{def:Sel}
{\rm Sel}(F,V_{f,\chi})={\rm ker}\biggl(H^1(\mathfrak{G}_{F,S},V_{f,\chi})
\longrightarrow\prod_{v}
\frac{H^1(F_v,V_{f,\chi})}{H^1_{\rm f}(F_v,V_{f,\chi})}\biggr),
\end{equation}
where $v$ runs over all places of $F$, and
\[
H^1_{\rm f}(F_v,V_{f,\chi}):=
\left\{
\begin{array}{ll}
{\rm ker}\left(H^1(F_v,V_{f,\chi})\longrightarrow H^1(F_v^{\rm ur},V_{f,\chi})\right)&\textrm{if $v\nmid p$};
\\
{\rm ker}\left(H^1(F_v,V_{f,\chi})\longrightarrow H^1(F_v,V_{f,\chi}\otimes_{\bQ_p}\mathbf{B}_{\rm cris})\right)&
\textrm{if $v\mid p$}.
\end{array}
\right.
\]

For $T_{f,\chi}:=T_f(k/2)\vert_{G_K}\otimes\chi$ with $T_{f}\subseteq V_f$ a $G_\bQ$-stable lattice,
we define ${\rm Sel}(F,T_{f,\chi})$ by the same recipe $(\ref{def:Sel})$,
replacing $H^1_{\rm f}(F_v,V_{f,\chi})$ by their natural preimages in $H^1(F_v,T_{f,\chi})$.
By abuse of notation, we let $F_{\pp}$ denote the completion of $F$ at any place above $\pp$,
and similarly for $F_{\overline\pp}$.

\begin{lem}\label{HT}
If the infinity type of $\chi$ is $(j,-j)$ with $j-k/2\in\bZ$ and $j\geqslant k/2$, then:
\[
H^1_{\rm f}(F_{\overline\pp},V_{f,\chi})=\{0\},
\quad\quad
H^1_{\rm f}(F_{\pp},V_{f,\chi})=H^1(F_{\pp},V_{f,\chi}).
\]
In particular, the classes in the Bloch--Kato Selmer group
${\rm Sel}(F,V_{f,\chi})$ are trivial at all primes above $\overline{\pp}$
and satisfy no local condition at the primes above $\pp$.
\end{lem}

\begin{proof}
Following our conventions (see the footnote in Theorem~\ref{thm:Log}),
we find that the Hodge--Tate weights of $V_{\overline\pp}:=V_{f,\chi}\vert_{G_{F_{\overline{\pp}}}}$
are $k/2-j$ and $1-k/2-j$, which are non-positive integers under the above hypotheses,
it follows that ${\rm Fil}^0\mathbf{D}_{\rm dR}(V_{\overline\pp})=\mathbf{D}_{\rm dR}(V_{\overline\pp})$.
Similarly, the Hodge--Tate weights of $V_{\pp}:=V_{f,\chi}\vert_{G_{F_{\pp}}}$ are
the strictly positive integers $k/2+j$ and $1-k/2+j$, which implies that ${\rm Fil}^0\mathbf{D}_{\rm dR}(V_{\pp})=\{0\}$.
The result thus follows from \cite[Thm.~4.1(ii)]{BK}.
\end{proof}

We will also have use for the following generalized Selmer groups
obtained by changing the local condition at the places above $p$
in definition $(\ref{def:Sel})$. For $v\mid p$ and $\mathcal{L}_v\in\{\emptyset,{\rm Gr},0\}$,
set
\[
H^1_{\mathcal{L}_v}(F_v,V_{f,\chi}):=
\left\{
\begin{array}{ll}
H^1(F_v,V_{f,\chi})&\textrm{if $\mathcal{L}_v=\emptyset$;}\\
H^1(F_v,\fil^+V_{f,\chi})&\textrm{if $\mathcal{L}_v={\rm Gr}$;}\\
\{0\}&\textrm{if $\mathcal{L}_v=0$,}
\end{array}
\right.
\]
and for $\mathcal{L}=\{\mathcal{L}_v\}_{v\mid p}$, define
\begin{equation}\label{def:auxsel}
H^1_{\mathcal{L}}(F,V_{f,\chi}):={\rm ker}\biggl(H^1(\mathfrak{G}_{F,S},V_{f,\chi})
\longrightarrow\prod_{v\nmid p}\frac{H^1(F_v,V_{f,\chi})}{H^1_{\rm f}(F_v,V_{f,\chi})}\times\prod_{v\mid p}
\frac{H^1(F_{v},V_{f,\chi})}{H_{\mathcal{L}_v}^1(F_{v},V_{f,\chi})}\biggr).\nonumber
\end{equation}

%
In particular, by Lemma~\ref{HT} we have
\begin{equation}\label{incl}
{\rm Sel}(F,V_{f,\chi})=H^1_{\emptyset,0}(F,V_{f,\chi}).
\end{equation}

As in $\S\ref{subsec:Gal-Hida}$, let $\mathbf{f}\in\cR[[q]]$ be the $\cR$-adic newform of tame level $N$
attached to $f$, and let $\mathbf{T}$ be the associated big Galois representation.

\begin{lem}\label{lem:tam}
Let $F$ be a finite extension of $K$, and
let $v$ be a prime of $F$ above a prime $\ell$ dividing $(D_K,N)$.
If $\bar{\rho}_f$ is ramified at $\ell$,
then $H^1(F_v^{\rm ur},\mathbf{T}^\dagger)$ is $\cR$-torsion free.
\end{lem}

\begin{proof}
This is well-known; see e.g. \cite[Lem.~3.12]{buy-bigHP}.
\end{proof}

For $F/K$ a finite extension, let
${\rm Sel}_{\rm Gr}(F,\mathbf{T}^\dagger)\subseteq H^1(\mathfrak{G}_{F,S},\mathbf{T}^\dagger)$ be the
strict Greenberg Selmer group of \cite[Def.~2.4.2]{howard-invmath}.

\begin{prop}\label{thm:Selmer}
If $\bar\rho_f$ is ramified at every prime $\ell\mid (D_K,N)$,
then $\mathfrak{X}_c\in{\rm Sel}_{\rm Gr}(H_c,\mathbf{T}^\dagger)$ for all positive integers
$c$ prime to $N$.
\end{prop}

\begin{proof}
The proof of \cite[Prop.~2.4.5]{howard-invmath} shows that 
the localization ${\rm loc}_v(\mathfrak{X}_c)$ of $\mathfrak{X}_c$ 
at any place $v$ of $H_c$ lies in the local subspace
$H_{\rm Gr}^1(H_{c,v},\mathbf{T}^\dagger)\subseteq H^1(H_{c,v},\mathbf{T}^\dagger)$
defining ${\rm Sel}_{\rm Gr}(H_c,\mathbf{T}^\dagger)$, except possibly at primes $v\mid\ell\mid N$
which are nonsplit in $K$, in which case it is shown that
\[
{\rm loc}_{v}(\mathfrak{X}_c)\in{\rm ker}\biggl(H^1(H_{c,v},\mathbf{T}^\dagger)
\longrightarrow\frac{H^1(H_{c,v}^{\rm ur},\mathbf{T}^\dagger)}{H^1(H_{c,v}^{\rm ur},\mathbf{T}^\dagger)_{\rm tors}}\biggr),
\]
where $H^1(H_{c,v}^{\rm ur},\mathbf{T}^\dagger)_{\rm tors}\subseteq H^1(H_{c,v}^{\rm ur},\mathbf{T}^\dagger)$
is the $\cR$-torsion submodule. 
In light of Lemma~\ref{lem:tam}, the result follows.
\end{proof}

\subsection{Higher weight specializations of big Heegner points}

In this section we show the connection between the higher weight specializations
of big Heegner points and the \'etale Abel--Jacobi images of classical Heegner cycles \cite{nekovar302}.
A first result along these lines was obtained in
\cite{cas-inv} under a certain nonvanishing hypothesis (see [\emph{loc.cit.}, Thm.~5.11]).
In Theorem~\ref{thm:higher-wt-sp} below we remove that hypothesis, and find a relation between
the global cohomology classes themselves, rather than just their cyclotomic $p$-adic heights.
\sk

Let $f\in S_{2r}(\Gamma_0(N))$ be a $p$-ordinary newform of even weight $2r\geqslant 2$, and let $T_f\subseteq V_f$
be a Galois stable lattice in the 
the $p$-adic Galois representations $\rho_f$ attached to $f$.
Fix an integer $c$ prime to $p$, let $\Delta_c$ be the kernel of the projection
${\rm Gal}(H_{cp^\infty}/K)\twoheadrightarrow\Gamma$, and for every character $\chi$ of $\Delta_c$, set
\[
{\rm Sel}_{\rm Gr}(K_{\infty},T_{f}(r)\otimes\chi):=\varprojlim_t{\rm Sel}(K_{t},T_f(r)\otimes\chi),
\]
where $K_t$ is the subfield of $K_\infty$ of degree $p^t$ over $K$.

\begin{lem}\label{lem:loc-inj}
If $\bar{\rho}_{f}\vert_{G_K}$ is irreducible, then the restriction map
\[
{\rm res}_\pp:{\rm Sel}_{\rm Gr}(K_{\infty},T_{f}(r)\otimes\chi)
\longrightarrow H^1_{\rm Iw}(K_{\infty,\pp},\mathscr{F}^+T_{f}(r)\otimes\chi)
\]
is injective.
\end{lem}

\begin{proof}
Since ${\rm Sel}_{\rm Gr}(K_{\infty},T_{f}(r)\otimes\chi)\subseteq H_{\rm Iw}^1(K_\infty,T_{f}(r)\otimes\chi)$ is $\Lambda$-torsion-free
by our hypothesis (see \cite[Lem.~2.2.9]{howard-PhD-I}), 
it suffices to show that ${\rm ker}({\rm res}_\pp)$ is $\Lambda$-torsion, or equivalently, that for infinitely many characters $\phi:\Gamma\rightarrow\boldsymbol{\mu}_{p^\infty}$ of $p$-power order,
the $\phi$-specialized map
\begin{equation}\label{eq:res-chi}
{\rm Sel}_{\rm Gr}(K,V_{f}(r)\otimes\chi\phi)\longrightarrow H^1(K_{\pp},\mathscr{F}^+V_f(r)\otimes\chi\phi)
\end{equation}
is injective. Let $\mathbf{f}\in\cR[[q]]$ be the $\cR$-adic newform attached to $f$, and let
$\k\in\mathcal{X}_{\ro}^a(\cR)$ be such that $\mathbf{f}_\nu$
is the ordinary $p$-stabilization of $f$. 
By Corollary~\ref{cor:2varL}, we have $\k(\mathscr{L}_{\pp,\bx}(\mathbf{f}))(\phi)\neq 0$ for all
but finitely many $\phi$, and by Theorem~\ref{thm:equality} this shows that
${\rm res}_\pp(\nu(\mathfrak{Z}_{c,\infty})^{\chi\phi})\neq 0$ for all but finitely many $\phi$, 
where $\nu(\mathfrak{Z}_{c,\infty})^{\chi\phi}$ is the image of $\nu(\mathfrak{Z}_{c,\infty}^\chi)$
under the $\phi$-specialization map
\[
{\rm Sel}_{\rm Gr}(K_\infty,T_f(r)\otimes\chi)\longrightarrow{\rm Sel}_{\rm Gr}(K,V_f(r)\otimes\chi\phi).
\]
Since by the results of \cite[\S{2.3}]{howard-invmath} the class
$\nu(\mathfrak{Z}_{c,\infty})^{\chi\phi}$ is the class over $K$ of an Euler system for $T_f(r)\otimes\chi\phi$
in the sense of \cite[Def.~7.2]{cas-hsieh1} with the Bloch--Kato local condition,
by \cite[Thm.~7.7]{cas-hsieh1} we have the implication
\[
\nu(\mathfrak{Z}_{c,\infty})^{\chi\phi}\neq 0\quad\Longrightarrow\quad
{\rm Sel}_{\rm Gr}(K,V_{f}(r)\otimes\chi\phi)=L.\nu(\mathfrak{Z}_{c,\infty})^{\chi\phi}.
\]
We thus conclude that $(\ref{eq:res-chi})$ is injective, and so the result follows.
\end{proof}

We are now ready to prove a strong refinement of the main result of
\cite{cas-inv}, 
as advanced in \cite[\S{1}]{cas-hsieh1}.
Let $K$ be an imaginary quadratic field of odd discriminant $-D_K<0$
satisfying hypothesis (\ref{eq:heeg-hyp}) relative to $N$ and in which the prime
$p=\pp\overline{\pp}$ splits (with $p\nmid 6N$, as usual) and fix a positive integer $c$
prime to $Np$. Let $\alpha$ be the $p$-adic unit root of the
$p$-th Hecke polynomial of $f$, and denote by
\[
\mathbf{z}_{f,c,\alpha}\in H^1_{\rm Iw}(H_{cp^\infty},T_f(r))
\]
the $\Lambda$-adic class constructed from generalized Heegner cycles in \cite[\S{5.2}]{cas-hsieh1}
with tame conductor $c$, which defines a class in ${\rm Sel}_{\rm Gr}(H_{cp^\infty},T_f(r))$.
We refer the reader to \cite[p.~1250]{cas-inv} and
\cite[Eq.~(4.6)]{cas-hsieh1} 
for the definition of the $p$-adic \'etale Abel--Jacobi images
\[
\Phi_{g,H_c}^{\rm \acute{e}t}(\Delta_{c,r}^{\rm heeg}),\;\Phi_{g,H_c}^{\rm \acute{e}t}(\Delta_{c,r}^{\rm bdp})\;\in
{\rm Sel}(H_c,T_g(r)).
\]
of classical 
and generalized 
Heegner cycles, respectively, attached to an eigenform $g$ of weight $2r\geqslant 2$. (For $r=1$,
both reduce to Kummer images of classical Heegner points.) For the next theorem,
write $k_0\geqslant 2$ for the weight of $f$, let $\mathbf{f}=\sum_{n=1}^\infty\mathbf{a}_nq^n\in\cR[[q]]$
be the associated $\cR$-adic newform of tame level $N$, and let $\mathfrak{Z}_{c,\infty}$ be Howard's system
of big Heegner points attached to $\mathbf{f}$ and $K$, as recalled in $\S\ref{subsec:bigHP}$.

\begin{thm}\label{thm:higher-wt-sp}
Assume in addition that: 
\begin{itemize}
\item{}
$k_0\equiv 2\pmod{p-1}$;
\item{}
$\bar{\rho}_f$ is ramified at every prime $q\mid(D_K,N)$;
\item{}
$\bar{\rho}_f$ $p$-distinguished;
\item{}
$\bar{\rho}_f\vert_{G_K}$ is irreducible.
\end{itemize}
Then for all $\k\in\mathcal{X}_{\ro}^a(\cR)$ of weight $2r>2$ with $2r\equiv k_0\pmod{2(p-1)}$
and trivial nebentypus, we have
\begin{equation}\label{eq:iw-classes}
\k(\mathfrak{Z}_{c,\infty})\cdot c^{r-1}=\mathbf{z}_{f_\k,c,\alpha}\nonumber
\end{equation}
as elements in ${\rm Sel}_{\rm Gr}(H_{cp^\infty},T_{f_\k}(r))$, where $\alpha=\k(\mathbf{a}_p)$.
In particular, for all such $\k$ 
we have
\begin{equation}\label{eq:0-classes}
\k(\mathfrak{Z}_{c,0})=\biggl(1-\frac{p^{r-1}}{\k(\mathbf{a}_p)}\biggr)^2
\cdot\frac{\Phi^{\rm \acute{e}t}_{f_\k,H_c}(\Delta^{\rm heeg}_{r,c})}{u_c(2c\sqrt{-D_K})^{r-1}},
\end{equation}
where $u_c=\vert\cO_c^\times\vert/2$.
\end{thm}

\begin{proof}
Letting $\chi$ be any fix character of $\Delta_c$, it suffices to show the equality
for the corresponding classes in ${\rm Sel}(K_\infty,V_f(r)\otimes\chi)$.
Take $\bx$ so that $\psi:=\xi_\nu$ restricts to the character $\chi$ on $\Delta_c$,
and let
\[
\mathcal{L}_{\k,\pp}^{\psi}:=\left\langle\mathcal{L}_{\pp,\psi}(-),\omega_{f_\k}\otimes t^{1-2r}\right\rangle:
H^1_{\rm Iw}(K_{\infty,\pp},\mathscr{F}^+V_{f_\k}(r)\otimes\chi^{-1})\longrightarrow\Lambda_{R}(\Gamma)
\]
be the map introduced in \cite[\S{5.3}]{cas-hsieh1} twisted by $\chi^{-1}$.
Then the map ${\rm tw}_{-1}\mathcal{L}_{\omega_{\F}}^\Gamma$
of Proposition~\ref{anticylo-reg} twisted by $\bx^{-1}$ specializes at $\k$ to the map
$\mathcal{L}_{\k,\pp}$ twisted by $\psi^{-1}$, and so by
Theorem~\ref{thm:equality} we have the relations
\begin{equation}\label{eq:1}
\mathcal{L}_{\k,\pp}^\psi(\k(\mathfrak{Z}_{c,\infty}))
=\k({\rm tw}_{-1}\mathcal{L}_{\omega_{\F}}^\Gamma({\rm res}_\pp(\mathfrak{Z}_{c,\infty}^{\bx^{-1}})))
=\k(\mathscr{L}_{\pp,\bx}(\mathbf{f})\cdot\sigma_{-1,\pp}).
\end{equation}

On the other hand, as shown in the proof of Theorem~\ref{thm:bigLp},
$\mathscr{L}_{\pp,\bx}(\mathbf{f})$
specializes at $\k$ to the $p$-adic $L$-function $\mathscr{L}_{\pp,\psi}(f_\k)$
of \cite[\S{3.3}]{cas-hsieh1}, and so by the
explicit reciprocity law of [\emph{loc.cit.}, Thm.~5.7] we have the equalities
\begin{equation}\label{eq:2}
\k(\mathscr{L}_{\pp,\bx}(\mathbf{f})\cdot\sigma_{-1,\pp})=\mathscr{L}_{\pp,\psi}(f_\k)\cdot\sigma_{-1,\pp}
=
\mathcal{L}_{\k,\pp}^\psi(\mathbf{z}_{f_\k,c,\alpha})\cdot c^{1-r},
\end{equation}
where $\alpha=\k(\mathbf{a}_p)$, since this is the $U_p$-eigenvalue of the $p$-stabilized newform $f_\k$.

Comparing $(\ref{eq:1})$ and $(\ref{eq:2})$, the proof of the first statement in Theorem~\ref{thm:higher-wt-sp}
follows from Lemma~\ref{lem:loc-inj} and the injectivity of $\mathcal{L}^\psi_{\k,\pp}$.
(The injectivity of this map is not explicitly stated in \cite[\S{5.3}]{cas-hsieh1}, but
it follows from the construction in [\emph{loc.cit.}, Thm.~5.1] and \cite[Prop.~4.11]{LZ2}.)
In particular, by the construction of $\mathbf{z}_{f_\k,c,\alpha}$
in \cite[\S{5.2}]{cas-hsieh1} (see [\emph{loc.cit.}, Def.~5.2]), we obtain the relation
\[
\k(\mathfrak{Z}_{c,0})=\frac{1}{u_c}\biggl(1-\frac{p^{r-1}}{\k(\mathbf{a}_p)}\biggr)^2
\cdot\Phi^{\rm \acute{e}t}_{f_\k,H_c}(\Delta^{\rm bdp}_{c,r}),
\]
where $u_c=\vert\cO_c^\times\vert/2$, and by \cite[Prop.~4.1.2]{bdp4} (with $r_1=2r-2$, $r_2=0$, and so $u=r-1$)
relation $(\ref{eq:0-classes})$ follows.
\end{proof}

\subsection{Proof of Theorem~C}



With the same notations as in 
the Introduction, let $V_\varrho^\vee$ denote the contragredient of the
representation $V_\varrho$, and let $g\in S_1(\Gamma_1(N_\varrho))$ be a cusp form
whose associated Deligne--Serre representation
$V_g$ is isomorphic to $V_\varrho^\vee$ (the existence of $g$ is a consequence of the proof \cite{KW-serre-I}
of Serre's modularity conjecture). If $\mathfrak{P}$ is prime of $E$ above $p$
as in the statement of Theorem~C, we shall view $g$ and $V_g$ as the defined over
the finite extension of $\bQ_p$ given by the completion $L:=E_{\mathfrak{P}}$,
and let $T_g\subseteq V_g$ be any $G_\bQ$-stable $\cO_L$-lattice.

Let $g_p\in S_1(\Gamma_0(p)\cap\Gamma_1(N_\varrho))$ be a $p$-stabilization
of $g$. By \cite[Thm.~3]{wiles88}, there exists an $\cR$-adic newform $\mathbf{f}$ of tame conductor $N_\varrho$
with $\nu_1(\mathbf{f})=g_p$, and our hypotheses on $\varrho$ guarantee that the associated residual representation
$\rho_{\mathbf{f}}$ is irreducible and $p$-distinguished. On the other hand,
let $\lambda$ be the grossencharacter associated to $A$ by the theory
of complex multiplication, and let $\mathscr{L}_{\pp,\bx}(\mathbf{f})$ be the two-variable
$p$-adic $L$-function of $\S\ref{subsec:Lp}$ constructed with the corresponding anticyclotomic $\cR$-adic
character $\bx$. If $\mathfrak{Z}_{c,\infty}\in{\rm Sel}_{\rm Gr}(H_{cp^\infty},\mathbf{T}^\dagger)$ is Howard's
system of big Heegner points attached to $\mathbf{f}$ and $K$, where as usual $c\cO_K$
is the conductor of $\lambda^{1-\tau}$, setting $\chi:=\xi_{\nu_1}$
we see from Theorem~\ref{thm:bigLp} and Theorem~\ref{thm:equality} that
\begin{equation}\label{eq:barp}
\begin{split}
L(A/\bQ,\varrho,1)\neq 0\;&\Longrightarrow\;
L(g/K,\chi\mathbf{N}^{-1/2},0)\neq 0\\
&\Longrightarrow\;\nu_1(\mathscr{L}_{\pp,\bx}(\mathbf{f}))(\mathds{1})\neq 0\\
&\Longrightarrow\;{\rm res}_\pp(\nu_1(\mathfrak{Z}_{c,\infty})^{\chi^{-1}}))\neq 0,
\end{split}
\end{equation}
and so ${\rm res}_{\overline\pp}(\nu_1(\mathfrak{Z}_{c,\infty})^{\chi})\neq 0$
by the action of complex conjugation. 

By the Euler system relations 
established in \cite[\S{2.3}]{howard-invmath},
$\nu_1(\mathfrak{Z}_{c,\infty})^{\chi^{-1}}$ is the class over $K$ of an
anticyclotomic Euler system for $T_g(1/2)\otimes\chi$ in the sense of \cite[Def.~7.2]{cas-hsieh1}
satisfying the local conditions defining $H^1_{{\rm Gr},{\rm Gr}}(K,V_{g,\chi})$,
where $V_{g,\chi}:=V_g(1/2)\vert_{G_K}\otimes\chi$. Thus as in the proof of \cite[Thm.~7.9]{cas-hsieh1}
the last nonvanishing in $(\ref{eq:barp})$ implies that
\[
H^1_{{\rm Gr},{\rm Gr}}(K,V_{g,\chi})=L\cdot(\nu_1(\mathfrak{Z}_{c,\infty})^{\chi^{-1}})^\tau
=L\cdot\nu_1(\mathfrak{Z}_{c,\infty})^\chi,
\]
and since ${\rm res}_{\overline\pp}(\nu_1(\mathfrak{Z}_{c,\infty})^{\chi})\neq 0$, this implies that
\begin{equation}\label{fouquet}
H^1_{{\rm Gr},0}(K,V_{g,\chi})=\{0\}.
\end{equation}

From Poitou--Tate duality we obtain the exact sequence
\begin{align*}
0\longrightarrow H^1_{0,\emptyset}(K,V_{g,\chi^{-1}})\longrightarrow H^1_{{\rm Gr},\emptyset}(K,V_{g,\chi^{-1}})
&\xrightarrow{{\rm res}_{\pp}} H^1(K_{\pp},\fil^+V_{g,\chi^{-1}})\nonumber\\
&\longrightarrow H^1_{\emptyset,0}(K,V_{g,\chi})^\vee\longrightarrow H^1_{{\rm Gr},0}(K,V_{g,\chi})^\vee,
\end{align*}
and since $H^1(K_{\pp},\fil^+V_{g,\chi^{-1}})$ is one-dimensional,
combining $(\ref{eq:barp})$ and $(\ref{fouquet})$ we conclude that
\begin{equation}\label{eq:vanishing}
H^1_{\emptyset,0}(K,V_{g,\chi})=\{0\},
\end{equation}
and so ${\rm Sel}(K,V_p(A)\otimes V_\varrho^\vee)$ vanishes by Lemma~\ref{thm:Selmer}.

Now let $H={\rm Gal}(F/\bQ)$, where $F$ is the splitting field of $\varrho$.
Since ${\rm Hom}_{G_\bQ}(V_\varrho,{\rm Sel}(F,V_p(A))_L)$ can be identified with the set of $H$-invariant
classes in ${\rm Sel}(F,V_p(A))\otimes V_\varrho^\vee={\rm Sel}(F,V_p(A)\otimes V_\varrho^\vee)$,
and the restriction map
\[
{\rm Sel}(\bQ,V_p(A)\otimes V_\varrho^\vee)\longrightarrow{\rm Sel}(F,V_p(A)\otimes V_\varrho^\vee)^H
\]
is as isomorphism, the proof of Theorem~C follows immediately from $(\ref{eq:vanishing})$.
\sk

\noindent\emph{Acknowledgements.}
It is a pleasure to thank Daniele Casazza,
Henri Darmon, Daniel Disegni,
Ben Howard, Ming-Lun Hsieh, Antonio Lei, Jan Nekov{\'a}{\v{r}}, Victor Rotger,
Ari Shnidman, Chris Skinner, Eric Urban, and Xin Wan for useful conversations
and correspondence related to this work.

\bibliographystyle{amsalpha}
\bibliography{Heegner} 

\providecommand{\bysame}{\leavevmode\hbox to3em{\hrulefill}\thinspace}
\providecommand{\MR}{\relax\ifhmode\unskip\space\fi MR }
\providecommand{\MRhref}[2]{%
  \href{http://www.ams.org/mathscinet-getitem?mr=#1}{#2}
}
\providecommand{\href}[2]{#2}
\begin{thebibliography}{BDR15}

\bibitem[BDP13]{bdp1}
Massimo Bertolini, Henri Darmon, and Kartik Prasanna, \emph{Generalized
  {H}eegner cycles and {$p$}-adic {R}ankin {$L$}-series}, Duke Math. J.
  \textbf{162} (2013), no.~6, 1033--1148.

\bibitem[BDP17]{bdp4}
\bysame, \emph{$p$-adic {$L$}-functions and the coniveau filtration on {C}how
  groups}, J. Reine Angew. Math., to appear (2017).

\bibitem[BDR15]{BDR2}
Massimo Bertolini, Henri Darmon, and Victor Rotger, \emph{Beilinson-{F}lach
  elements and {E}uler systems {II}: the {B}irch-{S}winnerton-{D}yer conjecture
  for {H}asse-{W}eil-{A}rtin {$L$}-series}, J. Algebraic Geom. \textbf{24}
  (2015), no.~3, 569--604.

\bibitem[Ber03]{berger:Kato}
Laurent Berger, \emph{Bloch and {K}ato's exponential map: three explicit
  formulas}, Doc. Math. (2003), no.~Extra Vol., 99--129 (electronic), Kazuya
  Kato's fiftieth birthday.

\bibitem[BK90]{BK}
Spencer Bloch and Kazuya Kato, \emph{{$L$}-functions and {T}amagawa numbers of
  motives}, The {G}rothendieck {F}estschrift, {V}ol.\ {I}, Progr. Math.,
  vol.~86, Birkh\"auser Boston, Boston, MA, 1990, pp.~333--400.

\bibitem[Bra11]{braIMRN}
Miljan Brako{\v{c}}evi{\'c}, \emph{Anticyclotomic {$p$}-adic {$L$}-function of
  central critical {R}ankin-{S}elberg {$L$}-value}, Int. Math. Res. Not.
  \textbf{Issue 12} (2011).

\bibitem[Bra12]{bra2}
\bysame, \emph{Two variable anticyclotomic {$p$}-adic {$L$}-functions for
  {H}ida families}, preprint, {\tt arXiv:1203.0953} (2012).

\bibitem[B{\"u}y14]{buy-bigHP}
K{\^a}z{\i}m B{\"u}y{\"u}kboduk, \emph{Big {H}eegner point {K}olyvagin system
  for a family of modular forms}, Selecta Math. (N.S.) \textbf{20} (2014),
  no.~3, 787--815.

\bibitem[Cas13]{cas-inv}
Francesc Castella, \emph{Heegner cycles and higher weight specializations of
  big {H}eegner points}, Math. Ann. \textbf{356} (2013), no.~4, 1247--1282.

\bibitem[CH17]{cas-hsieh1}
Francesc Castella and Ming-Lun Hsieh, \emph{{H}eegner cycles and $p$-adic
  {$L$}-functions}, Math. Ann., to appear (2017).

\bibitem[Hec27]{hecke-CM}
E.~Hecke, \emph{Zur {T}heorie der elliptischen {M}odulfunktionen}, Math. Ann.
  \textbf{97} (1927), no.~1, 210--242.

\bibitem[Hid86]{hida86b}
Haruzo Hida, \emph{Galois representations into {${\rm GL}\sb 2({\bf Z}\sb
  p[[X]])$} attached to ordinary cusp forms}, Invent. Math. \textbf{85} (1986),
  no.~3, 545--613.

\bibitem[Hid00]{Hida-GMF}
\bysame, \emph{Geometric modular forms and elliptic curves}, World Scientific
  Publishing Co., Inc., River Edge, NJ, 2000.

\bibitem[How04]{howard-PhD-I}
Benjamin Howard, \emph{The {H}eegner point {K}olyvagin system}, Compos. Math.
  \textbf{140} (2004), no.~6, 1439--1472.

\bibitem[How07]{howard-invmath}
\bysame, \emph{Variation of {H}eegner points in {H}ida families}, Invent. Math.
  \textbf{167} (2007), no.~1, 91--128.

\bibitem[Hsi14]{hsieh}
Ming-Lun Hsieh, \emph{Special values of anticyclotomic {R}ankin-{S}elberg
  {$L$}-functions.}, Doc. Math. (2014), no.~19, 709--767 (electronic).

\bibitem[KLZ17]{KLZ2}
Guido Kings, David Loeffler, and Sarah Zerbes, \emph{Rankin--{E}isenstein
  classes and explicit reciprocity laws}, Camb. J. Math. \textbf{5} (2017),
  no.~2, 1--122.

\bibitem[KW09]{KW-serre-I}
Chandrashekhar Khare and Jean-Pierre Wintenberger, \emph{Serre's modularity
  conjecture. {I}}, Invent. Math. \textbf{178} (2009), no.~3, 485--504.

\bibitem[LZ14]{LZ2}
David Loeffler and Sarah~Livia Zerbes, \emph{Iwasawa theory and {$p$}-adic
  {$L$}-functions over {$\Bbb{Z}\sb p\sp 2$}-extensions}, Int. J. Number Theory
  \textbf{10} (2014), no.~8, 2045--2095.

\bibitem[MT90]{Mazur-Tilouine}
B.~Mazur and J.~Tilouine, \emph{Repr\'esentations galoisiennes,
  diff\'erentielles de {K}\"ahler et ``conjectures principales''}, Inst. Hautes
  \'Etudes Sci. Publ. Math. (1990), no.~71, 65--103.

\bibitem[Nek95]{nekovar302}
Jan Nekov{\'a}{\v{r}}, \emph{On the {$p$}-adic height of {H}eegner cycles},
  Math. Ann. \textbf{302} (1995), no.~4, 609--686.

\bibitem[Nek00]{nekovarCRM}
\bysame, \emph{{$p$}-adic {A}bel-{J}acobi maps and {$p$}-adic heights}, The
  arithmetic and geometry of algebraic cycles ({B}anff, {AB}, 1998), CRM Proc.
  Lecture Notes, vol.~24, Amer. Math. Soc., Providence, RI, 2000, pp.~367--379.

\bibitem[Och03]{Ochiai-Col}
Tadashi Ochiai, \emph{A generalization of the {C}oleman map for {H}ida
  deformations}, Amer. J. Math. \textbf{125} (2003), no.~4, 849--892.

\bibitem[PR94]{PR115}
Bernadette Perrin-Riou, \emph{Th\'eorie d'{I}wasawa des repr\'esentations
  {$p$}-adiques sur un corps local}, Invent. Math. \textbf{115} (1994), no.~1,
  81--161, With an appendix by Jean-Marc Fontaine.

\bibitem[Wil88]{wiles88}
A.~Wiles, \emph{On ordinary {$\lambda$}-adic representations associated to
  modular forms}, Invent. Math. \textbf{94} (1988), no.~3, 529--573.

\end{thebibliography}

\end{document}